    \documentclass[11pt]{article}
    \usepackage{amsmath,amsthm,amscd,amssymb,latexsym,verbatim, xfrac}
    \usepackage{amssymb}

\usepackage[normalem]{ulem}

\usepackage{graphicx,epsf,cite}

\usepackage[shortlabels]{enumitem}
    
\usepackage{xcolor}
\usepackage{graphicx}

    \usepackage[margin=1.2in]{geometry}

    \newtheorem{theorem}{Theorem}[section]

    \newtheorem{lemma}[theorem]{Lemma}

    \theoremstyle{definition}

    \newtheorem{remark}[theorem]{Remark}
    \newtheorem{definition}[theorem]{Definition}
    \newtheorem{assumption}[theorem]{Assumption}

    \newcounter{smalllist}

    \DeclareMathOperator*{\Lip}{Lip}

    \allowdisplaybreaks
    \numberwithin{equation}{section}
    
    \newcommand{\lb}{\label}

    \newcommand{\wti}{\widetilde  }

    \newcommand{\tr}{\text{\rm{Tr}}}

    \newcommand{\beq}{\begin{equation}}
    \newcommand{\eeq}{\end{equation}}
    
    \newcommand{\bal}{\begin{align}}
    \newcommand{\eal}{\end{align}}
    \newcommand{\bals}{\begin{align*}}
    \newcommand{\eals}{\end{align*}}
    
    \newcommand{\R}{\ensuremath{\mathbb{R}}}

    \newcommand{\bbR}{{\mathbb{R}}}

    \newcommand{\bbS}{{\mathbb{S}}}

    \newcommand{\calS}{{\mathcal S}}

    \newcommand{\calB}{{\mathcal B}}

    \newcommand{\calC}{{\mathcal C}}

    \newcommand{\calK}{{\mathcal K}}

    \newcommand{\ep}{\varepsilon}
    \newcommand{\eps}{\varepsilon}

    \newcommand{\tp}{\tilde{p}}

\date{\today}
\title{Well-posedness of a Hele-Shaw problem in general dimensions}

\author{Olga Turanova\thanks{Department of Mathematics, Michigan State University, Wells Hall, 619 Red Cedar Road,
East Lansing, MI 48824, \texttt{turanova@msu.edu}.}, Yuming Paul Zhang\thanks{Department of Mathematics and Statistics, Auburn University, Parker Hall, 221 Roosevelt Concourse, Auburn, AL 36849, \texttt{yzhangpaul@auburn.edu}.}
}

\begin{document}
     \maketitle
\begin{abstract} 
We investigate a Hele-Shaw type free boundary problem in general spatial dimension. We establish a comparison principle and well-posedness for the problem using a notion of viscosity flows. An important feature of our work is that our hypotheses allow for the free boundary velocity to take negative values.

\end{abstract}
    
\medskip

\noindent {\bf Keywords:} Hele-Shaw type flow, well-posedness, comparison principle, viscosity solutions 

\medskip

\noindent{\bf AMS Subject Classification (2020):}  	 35B51, 35R35, 76D27

\section{Introduction} \lb{S1}

In 1898, H. S. Hele-Shaw introduced what is now known as the classical Hele-Shaw problem to describe the dynamics of an incompressible viscous Newtonian fluid between two narrowly separated parallel surfaces \cite{hele1898experiments}. In this original setting, the fluid pressure $p$ is harmonic within its positive set, and the free boundary expands with a velocity proportional to the pressure gradient. In recent decades, problems of Hele-Shaw type have been used in biological and social modeling, including to describe the mechanical evolution of tumor cells \cite{PQV, David_S, jacobs2022tumor, SulakTuranova} and the dynamics of congested populations \cite{maury2010, CKY}. Furthermore, Hele-Shaw type problems are mathematically related to the one-phase Muskat problem, particularly in the presence of constant gravity \cite{Hongjie, schwab2024well}.

In this paper, we consider the following free boundary problem:
\begin{equation}\lb{main}
\left\{
\begin{aligned}
    &F(x,p,\nabla p,D^2 p)=0 &&\quad\text{ in }\{p>0\},\\
  & \partial_t p=V(x,  \nabla_x p)| \nabla_x p| &&\quad\text{ on }\partial\{p>0\}.
\end{aligned}   
\right.
\end{equation}
Here,  $F$ is an elliptic operator and $V$ is a given velocity function; both  are assumed to satisfy certain regularity conditions (see Assumptions \ref{assump2} and \ref{assump3}). The boundary condition in \eqref{main} dictates that the free boundary $\partial \{p>0\}$ of the unknown function $p(x,t)$ evolves according to the normal velocity $V(x,\nabla_x p)$. 
Our assumptions allow for $V$ to take negative values, which corresponds to contraction of the interface. This is in contrast to the classical case, in which the free boundary can only expand.


It is known that even the simplest constant-coefficient versions of \eqref{main} may develop singularities in finite time \cite{kim2003}. Consequently, the search for appropriate weak solutions has led to several frameworks, including the viscosity solution approach introduced by Kim in \cite{kim2003} and the generalized set solutions developed by Cardaliaguet and co-authors in \cite{card2004, card2007}. The authors' previous work  \cite{turanova2025hele} builds on both of these approaches to introduce a notion of viscosity flows to inhomogeneous problems of Hele-Shaw type in spatial dimension one. 

In the present paper, we define viscosity flows and viscosity solutions for \eqref{main} and establish how these two notions are related (see Definition \ref{D.1.2}, Definition \ref{def23}, and Lemma \ref{L.def}). 
Our first main result is a comparison principle for viscosity flows. 
\begin{theorem}[Comparison principle]\lb{L.cp}
Under Assumptions \ref{assump2} and \ref{assump3}, let two bounded sets $\Omega_1$ and $\Omega_2$  be, respectively,  viscosity sub- and superflows to \eqref{main} 
in $\R^d \times (0,T)$ such that $\overline{\Omega_1(0)}\subset\Omega_2(0)$. Then $\Omega_1\subseteq\Omega_2$. 
\end{theorem}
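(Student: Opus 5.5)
We argue by contradiction using a first-crossing-time argument combined with a sup/inf-convolution regularization, following the scheme of Kim \cite{kim2003} and the generalized set solutions of Cardaliaguet \cite{card2004}. This is the scheme that was adapted to the inhomogeneous setting in one dimension in \cite{turanova2025hele}.

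\textbf{Regularization.} Since $\overline{\Omega_1(0)}\subset\Omega_2(0)$ and both sets are bounded, there is some $r_0>0$ with $\Omega_1(0)+B_{r_0}\subset\Omega_2(0)$. We replace $\Omega_1$ by a spatially enlarged flow
\[
\tilde\Omega_1(t)=\bigcup_{x\in\Omega_1(t)}\overline{B}_{r(t)}(x),
\]
and replace $\Omega_2$ by the corresponding inf-convolution (erosion) with radius $r(t)$. Here $r(t)=r_0-\delta-Ct$ is chosen to decrease in time, with $\delta$ small and $C$ large. With this choice $\tilde\Omega_1$ is still a (strict) viscosity subflow and the eroded $\Omega_2$ is still a (strict) superflow. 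The point of the convolution is that the boundaries acquire interior and exterior ball properties at contact points, so the first-order information needed in Definition \ref{D.1.2} (normals, gradients) exists there.

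The dependence of $F$ and $V$ on $x$ means that translating by a vector of length $r$ costs an error. By the regularity in Assumptions \ref{assump2}--\ref{assump3} (Lipschitz dependence in $x$, monotonicity in $p$, continuity of $V$), this error is of size $O(r)$. This is why the radius is made time-dependent with slope $C$: the loss of one ball radius per unit time produces an extra normal velocity of $-C$, which absorbs both the $x$-error in $V$ and the possibility that $V<0$. We also need a strictness margin in the elliptic equation. For that, we either perturb the pressure by a factor $(1\pm\varepsilon)$ or use the structure of $F$ in $p$ (properness) to obtain strict sub/supersolutions of the interior problem. Lemma \ref{L.def} would be used to pass between the flow formulation and viscosity sub/supersolution pressures.

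\textbf{First contact.} Let $t_0$ be the first time at which $\overline{\tilde\Omega_1(t)}$ touches the complement of the eroded $\Omega_2(t)$. Boundedness and semicontinuity of the flows give $t_0>0$ and a contact point $x_0$. There are two cases.
\begin{enumerate}[(a)]
\item If the contact occurs in the interior of the positive sets, the pressures touch inside, and the comparison principle for the elliptic operator $F$ gives a contradiction via strictness.
\item The main case is contact at a free boundary point. There the enlarged sub-flow has an interior ball and the eroded super-flow has an exterior ball touching at $x_0$, so a common normal $\nu$ exists. Comparing pressures near $x_0$ (Hopf-type barrier arguments built from the ball) gives $|\nabla p_1|\le|\nabla p_2|$ in the viscosity sense along $\nu$. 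The timing of first contact forces the normal velocity of the sub-flow to be at least that of the super-flow. Combining these with the monotonicity and continuity of $V$ contradicts the $C$-margin.
\end{enumerate}

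\textbf{Main obstacle.} The step I expect to be hardest is making the free-boundary case rigorous when $V$ can be negative. In the classical setting, positivity of the velocity makes the positive phase monotone and lets one use barriers built from the pressure alone. Here the set can retract, so the sign of $V|\nabla p|$ must be handled through the flow (set) formulation rather than the pressure. In particular we must check that the gradient comparison from the ball barrier feeds correctly into $V(x,\nabla p)|\nabla p|$, which need not be monotone in $|\nabla p|$ when $V<0$. I would first try to use the test-function form of Definition \ref{D.1.2}, in which the velocity condition is tested by smooth barriers touching at $(x_0,t_0)$. I would build the barrier by radially symmetric solutions in the annulus given by the ball properties, so that only the continuity of $V$ in $(x,\nabla p)$, together with the $O(r)+C$ margin, is needed. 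Finally, letting $\delta\to0$ and $C\,t$ remain small on short time intervals, and iterating in time, yields $\Omega_1\subseteq\Omega_2$ on $(0,T)$.
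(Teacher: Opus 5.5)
\textbf{The free-boundary case (b) has a genuine gap: nothing in your plan produces test sets whose pressure gradients compare in the right direction.}

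Definition~\ref{D.1.2} only speaks through regular test sets and through the nonlocal quantity $\nabla p(x;\Phi(t))$, which increases with the domain. Consider the barriers you can build from the two tangent balls at $x_0$:
\begin{itemize}
\item an external test set $\Phi$ for $\tilde\Omega_1$, lying outside the exterior ball of the eroded $\Omega_2$;
\item an internal test set $\Psi$ for the eroded $\Omega_2$, lying inside the interior ball of $\tilde\Omega_1$.
\end{itemize}
These satisfy $\Psi(t_0)\subseteq\Phi(t_0)$, with both boundaries passing through $x_0$. Hence $|\nabla p(x_0;\Psi(t_0))|\le|\nabla p(x_0;\Phi(t_0))|$, and by \eqref{cond5'}, $V(x_0,\nabla p(x_0;\Phi(t_0)))\ge V(x_0,\nabla p(x_0;\Psi(t_0)))$. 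This is fully compatible with the subflow inequality for $\Phi$ and the superflow inequality for $\Psi$, so no contradiction arises, whatever margin you build in.

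Passing through Lemma~\ref{L.def} to pressures $p_1\le p_2$ does not rescue this. The definitions are tested against $p(\cdot;\Phi)\ge p_1$ and $p(\cdot;\Psi)\le p_2$, which are not ordered relative to each other. A function-level comparison for general $F$ is exactly what is unavailable here.

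The missing idea is interposition. You need one regular space-time set that is externally tangent to one regularized flow while a translate of it is internally tangent to the other. Then both viscosity inequalities involve the same velocity and the same slice up to translation. The paper proceeds as follows:
\begin{itemize}
\item It obtains this set from Theorem~\ref{P.3.9}, at points $(x_1,t_1)$, $(x_2,t_2)$ realizing the distance $r/2$.
\item It transfers back to $\Omega_i^\delta$ at the convolution centers $y_i$ via Lemmas~\ref{L.3.3} and~\ref{L.3.6}, gaining $\pm h$ in the velocity.
\item The two gradients $q_1,q_2$ then come from the same slice shifted by $y_2-y_1$, with $|y_2-y_1|\le 3r$. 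Assumption~\ref{assump3}\ref{item:assump1.1 3} gives $|q_1|\le(1+\delta)|q_2|$, with $q_1,q_2$ pointing in the same direction.
\end{itemize}

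\textbf{Two further parts of your plan fail as stated.}

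First, convolving only in space gives balls in each time slice but no space-time regularity, since the flows are merely semicontinuous in $t$. So no regular test set need exist at the contact point. The contact may also occur with $\nu_x=0$, where Definition~\ref{D.1.2} says nothing. The paper instead convolves with space-time balls of radius $r-ht$. This makes contact points regular from both sides with a common normal, and Lemma~\ref{L.3.4} then shows $x_1\neq x_2$ and that both velocities are finite.

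Second, your additive margin $C$ from $r(t)=r_0-\delta-Ct$ cannot absorb the $x$-translation error. Assumption~\ref{assump3}\ref{item:assump1.1 3} controls $|\nabla p|$ only up to a factor $(1+\delta)$, and \eqref{cond5} turns this into a factor $(1+\delta)^2$ on $V$. For $V=a(x)|q|+b(x)$, test sets can have arbitrarily large gradients and velocities, so the resulting error is of order $\delta\,|\calK^{\Sigma}|$, not $O(r)$. The paper gets a multiplicative margin from the time rescaling $t\mapsto(1+\delta)^{2}t$, which replaces $V$ by $(1+\delta)^{\mp2}V$. It reserves the additive margin $h$ for the terms $C|y_1-y_2|^2/\delta+C\delta$, with $r=\eps^2h$ and $\delta=\eps h$.

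Finally, your case (a) is vacuous here. The pressures are determined slice by slice by the sets, so contact always occurs at the boundary.
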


We use the comparison principle, along with a Perron's method argument, to establish the existence of maximal viscosity flows. Namely, we prove that for any   initial set that is regular enough, there exists a  flow that contains any subflow that starts from the same initial set.
\begin{theorem}[Existence of maximal flows]\lb{T.3.4}
Under Assumptions \ref{assump2} and \ref{assump3}, let $\Omega_0\subseteq\R^d $ be open and bounded and satisfy the interior ball condition. Then there exists an upper semicontinuous set $\Omega\subseteq\R^d\times [0,\infty)$ that is a  viscosity flow of the equation \eqref{main} such that $\Omega(0)=\Omega_0$. Moreover, it contains any other viscosity subflows of \eqref{main} with initial set $\subseteq \Omega_0$.
\end{theorem}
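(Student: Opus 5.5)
We would construct $\Omega$ by Perron's method, taking the union of all admissible subflows and then regularizing. Let $\mathcal S$ be the family of viscosity subflows $U$ of \eqref{main} in $\R^d\times(0,\infty)$ with $U(0)\subseteq\Omega_0$. Set $W:=\bigcup_{U\in\mathcal S}U$ and let $\Omega$ be the upper semicontinuous envelope of $W$, i.e. the set whose indicator function is $(\chi_W)^*$; equivalently we could take the interior/closure normalizations dictated by Definition \ref{D.1.2}.

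\textbf{Step 1: the family is nonempty and locally bounded.} We first build barriers. Using the interior ball condition for $\Omega_0$ and the regularity of $F$ and $V$ from Assumptions \ref{assump2} and \ref{assump3}, we construct explicit radial subflows. These are small balls around interior points of $\Omega_0$, carrying pressure profiles that solve $F\le 0$. By Lipschitz bounds on $V$, their radii shrink at most at a controlled rate, and they stay inside balls touching $\partial\Omega_0$ from inside. This shows $\mathcal S\neq\emptyset$ and that $\Omega(0)\supseteq\Omega_0$ near every point of $\Omega_0$.

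Symmetrically, we construct a large expanding radial superflow $B_{R(t)}$ with $\overline{\Omega_0}\subset B_{R(0)}$. Theorem \ref{L.cp} then gives $U\subseteq B_{R(t)}$ for every $U\in\mathcal S$. To handle initial data with $U(0)\subseteq\Omega_0$ but without strict inclusion, we apply the comparison to $B_{R(0)+\eps}$ and let $\eps\to0$. So $\Omega$ is bounded on finite time intervals.

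For the initial trace $\Omega(0)=\Omega_0$ we need an outer barrier as well. For each $x_0\notin\overline{\Omega_0}$, we place a superflow whose complement is a small ball around $x_0$ that shrinks only slowly. Comparison then shows that no subflow enters a neighbourhood of $x_0$ for short times. Points of $\partial\Omega_0$ themselves must be handled by the upper semicontinuous convention together with the choice of $\Omega(0)$.

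\textbf{Step 2: $\Omega$ is a subflow.} This is the standard fact that the sup of subsolutions, after upper semicontinuous regularization, is a subsolution. Suppose a smooth test flow touches $\Omega$ from outside at $(x_0,t_0)$, in the sense of Definition \ref{D.1.2}. We perturb it to make the touching strict. Then we find subflows $U_n\in\mathcal S$ and points $(x_n,t_n)\to(x_0,t_0)$ at which a slightly shifted test flow touches $U_n$. Since the subflow inequality holds for each $U_n$, passing to the limit using the continuity of $F$ and $V$ yields the inequality at $(x_0,t_0)$. Lemma \ref{L.def} may be used to switch between the flow and pressure formulations when convenient.

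\textbf{Step 3: $\Omega$ is a superflow, via a bump construction.} Suppose the lower semicontinuous envelope of $\Omega$ fails the supersolution property at some point. Then a smooth test flow touches it from inside while violating the superflow inequality strictly. We use this strict subsolution property to enlarge $\Omega$ locally: take the union of $\Omega$ with a small neighbourhood of the touching region, bounded by a slightly pushed-out level set of the test function. Near the touching point this new set is still a subflow, by the strict inequality and the continuity of $F$ and $V$. Away from that point it coincides with $\Omega$, and gluing is permitted since subflow properties are local. Its initial set is still inside $\Omega_0$, so the new set lies in $\mathcal S$. But it strictly exceeds $\Omega$, contradicting maximality.

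This bump step is the main obstacle. Because $V$ may be negative, the boundary can recede, so the bump must be arranged so that the pressure condition (the elliptic part $F$ in the positive set) and the velocity condition are both preserved on the modified region. Moreover, the touching may occur at a point where $\Omega$ has empty interior in the positive-pressure sense. We would first try the construction in the pressure formulation, modifying $p$ by $\max(p,\phi_\delta)$ locally, and then translate back to flows using Lemma \ref{L.def}. The maximality statement is immediate from the construction: any subflow with initial set $\subseteq\Omega_0$ belongs to $\mathcal S$ and hence is contained in $\Omega$.
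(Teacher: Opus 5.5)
Your overall architecture is the paper's: take $\Omega$ to be the union of all subflows with initial set inside $\Omega_0$, use shrinking balls to show the family is nonempty and fills $\Omega_0$ at $t=0$, get the subflow property from stability, and show $\Omega_\#$ is a superflow by a local bump that would contradict maximality. The gap is in Step 3, where you yourself place the difficulty. The route you say you would try first cannot work. Lemma \ref{L.def} only goes from flows to functions: the associated sub-function of a subflow is a subsolution. The converse is not available, and the paper notes it is unclear. So proving that $\max(p,\phi_\delta)$ is a subsolution gives you no subflow, and hence no competitor in $\mathcal S$. The same caveat applies to your use of Lemma \ref{L.def} in Step 2; stability has to be argued directly for sets.

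In the set-level version, ``strict inequality plus continuity of $F$ and $V$'' does not show that the enlarged set is a subflow. The subflow inequality must hold for an arbitrary regular $\widetilde\Phi$ externally tangent to the union, and the pressure of $\widetilde\Phi$ is not the pressure of the bump. The missing ingredient is monotonicity.
\begin{itemize}
\item Build the bump as in the paper. Take the test set $\Phi$, made to touch $\Omega_\#$ only at $(x_0,t_0)$ among times $\le t_0$. Truncate it to $t<t_0$ and translate it by $\eps e_0$ along its outer normal to get $\Phi_\eps$. Set $\widetilde\Omega=\Omega_\#\cup\Phi_\eps$. Since $\nu_x^\Phi\neq 0$, this sticks out of $\Omega_\#$, and the new part lies in a small ball around $(x_0,t_0)$.
\item Suppose a regular $\widetilde\Phi\supseteq\widetilde\Omega$ touches at $(x_1,t_1)\in\partial\Phi_\eps$. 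Then $\widetilde\Phi\supseteq\Phi_\eps$, so $\calK^{\widetilde\Phi}_{x_1,t_1}\le\calK^{\Phi_\eps}_{x_1,t_1}$.
\item The comparison principle for $F$ on $\Phi_\eps(t_1)\subseteq\widetilde\Phi(t_1)$ gives $|\nabla p(x_1;\widetilde\Phi(t_1))|\ge|\nabla p(x_1;\Phi_\eps(t_1))|$, with both gradients along the same normal. Then \eqref{cond5'} gives $V(x_1,\nabla p(x_1;\widetilde\Phi(t_1)))\ge V(x_1,\nabla p(x_1;\Phi_\eps(t_1)))$.
\item Finally, $V(x_1,\nabla p(x_1;\Phi_\eps(t_1)))>\calK^{\Phi_\eps}_{x_1,t_1}$. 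This strict inequality is the only place continuity enters: it propagates $V(x_0,\nabla p(x_0;\Phi(t_0)))>\calK^{\Phi}_{x_0,t_0}$ to nearby points for small $\eps$.
\end{itemize}
Done this way, the sign of $V$ and the ``empty interior'' worry are irrelevant. In the flow formulation the pressure is always that of the test set, so no interior equation has to be preserved.

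Step 1 does more than needed and contains an inconsistency.
\begin{itemize}
\item $\Omega(0)\subseteq\Omega_0$ is automatic from $U(0)\subseteq\Omega_0$, so no outer barriers or expanding superflow are required.
\item The shrinking balls only need speed $A=\max\{-V(x,0),0\}$. This works because $V(x,q)\ge V(x,0)$, which follows from \eqref{cond5'} and continuity; $V$ is not assumed Lipschitz.
\item Replacing $W$ by its space-time closure $(\chi_W)^*$ would put $\partial\Omega_0$ into $\Omega(0)$, contradicting $\Omega(0)=\Omega_0$. The paper's semicontinuity for sets is only with respect to time slices.
\end{itemize}
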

 Finally, we prove two well-posedness results, which, for the purposes of introducing our work and discussing its significance, we describe  informally here:
\begin{itemize}
    \item Theorem \ref{thm:shrink/expand} states that if the free boundary velocity at the initial time is strictly positive or strictly negative, then the maximal flow given by Theorem \ref{T.3.4} is unique.
    \item Theorem \ref{thm:generic} is a generic uniqueness result, which states, roughly, that for ``most" initial sets, there exists a unique viscosity flow starting from that set.
\end{itemize}

The first key feature of our work is that we are able to establish well-posedness of \eqref{main} without assuming that the velocity $V$ is positive. To our knowledge, the only other result on Hele-Shaw type free boundary problems with possibly negative velocity is Kim's \cite{contact}. The second is that we rigorously  connect viscosity solutions defined in terms of ``testing by sets" and ``testing by functions." Such a connection is not available between  viscosity solutions to Hele-Shaw type problems introduced by Kim  and the generalized set solutions of Cardaliaguet and co-authors. Third, by employing the ``generic uniqueness'' perspective suggested by the results of \cite{card2007}, we show that uniqueness holds for almost all initial sets satisfying an interior ball condition. This generalizes the uniqueness results seen in star-shaped or strictly expanding configurations \cite{contact}.


Unlike the case where the interior operator is the Laplacian, standard sup- and inf-convolution techniques cannot be applied directly in our setting. Due to the complexity of the operators involved, the sup-convolution of a subsolution is generally no longer a subsolution. To overcome this, we shift our focus from functions to sets, adapting the approach introduced by the authors in \cite{turanova2025hele} for free boundary problems in one spatial dimension. This strategy is effective because comparing sets rather than functions effectively restricts the comparison to a smaller class of candidates. Indeed, while Lemma \ref{L.def} establishes that viscosity subflows (respectively, superflows) correspond to subsolutions (respectively, supersolutions), the converse remains unclear.

Although our overall strategy is inspired by \cite{turanova2025hele}, generalizing that approach to higher dimensions presents significant challenges. In one dimension, the free boundary is simply given by several functions of time and non-degeneracy follows immediately from the Hopf lemma, but in higher dimensions, the geometry is much more complicated and non-degeneracy requires the free boundary to be $C^1$ in space. We address this by combining the set convolution techniques of \cite{turanova2025hele} with a set interpolation result of \cite{card2007}. However,  because the method in \cite{card2007} relies on translation-invariant operators to shift sets for testing, it cannot be applied directly to our spatially dependent operators.

The regularization of sets employed in \cite{turanova2025hele} is sufficiently flexible to accommodate the errors introduced by spatial shifts. Moreover, by imposing a Lipschitz assumption on the coefficients, we are able to handle negative free boundary speeds. In contrast, \cite{turanova2025hele} requires positive free boundary speeds with H\"{o}lder-continuous coefficients.

We will discuss in more detail the  relationship between our results and those in \cite{turanova2025hele, contact, card2007} in Remarks \ref{rem:our paper}, \ref{rem:kim intro} and \ref{rem:cardaliaguet} below.


\subsection{Notation} \label{ss:results}

We use $B_r((x,t))\subseteq\R^{d+1}$ to denote the space-time ball centered at $(x,t)\in\R^{d+1}$ with radius $r>0$. We use $\calB_r(x)\subseteq\R^{d}$ to denote the spatial ball centered at $x\in\R^{d}$ with radius $r>0$. For a set $A\in \R^{d+1}$, we write
\[
B_r(A):=\{ (x+y,t+s)\,:\, (x,t)\in A\text{ and }(y,s)\in B_r((0,0))\}.
\]
For a space-time set $\Omega\subseteq \R^{d+1}$, we denote the time slice of $\Omega$ at time $t$ by   $\Omega(t)$:
\[
\Omega(t) = \{x \,:\, (x,t)\in \Omega\}.
\]
For any set $K\subseteq\mathbb{R}^{d+1}$, its closure is denoted by $\overline{K}$, and
the complement of $K$ is denoted as $K^c$. 
For $(x,t),(y,s)\in\R^{d+1}$, we use the following notation for distance:
\[
|(x,t),(y,s)|:=|(x,t)-(y,s)|=\sqrt{|x-y|^2+|t-s|^2}.
\]
The minimal distance between $\Omega_1, \Omega_2\subseteq\R^{d+1}$ is defined as
\[
d(\Omega_1, \Omega_2)
= \inf_{(x,t)\in \Omega_1,\; (y,s)\in \Omega_2}
\, |(x,t) - (y,s)|.
\]

For a nonnegative lower semicontinuous function $p:\bbR^d\times [0,\infty)\to [0,\infty)$, we denote its positive set and the  boundary of its positive set by,
\[
\Omega_p:=\{(x,t)\in\bbR^d\times [0,\infty) \,:\, p(x,t)>0\}\quad\text{and}\quad\Gamma_p:=\{(x,t)\in\bbR^d\times [0,\infty) \,:\, x\in\Gamma_p(t)\}.
\]

Finally, we  denote the set of symmetric $d\times d$ matrices by  $\mathbb{M}^d$.

\subsection{Assumptions}
We begin by stating basic assumptions on $F$ and $V$.

\begin{assumption}[Basic assumptions]
\label{assump2} 
We assume that the following hold for
\[
F: \R^d\times [0,\infty)\times \R^d\times \mathbb{M}^d\rightarrow \R \quad \text{and}\quad V:\R^d\times \R^d\rightarrow \R.
\]
\begin{enumerate}[(i)]
    \item \label{item:assump1.1.1} $F$ is uniformly elliptic and Lipschitz-continuous in all its variables 
    and, for any $U\subset \R^d$ with $C^{1,1}$ boundary, there exists a unique viscosity, non-trivial solution of 
 \beq\lb{elliptic}
F(x,p, \nabla p,D^2p)=0\quad \text{ in }U
\eeq
with $0$ boundary data. 
   \item \label{item:assump1.1 2}
$V$ is continuous in both variables.  There exists a constant $C>0$ such that for all $\gamma\in (0,\gamma_0)$  and for all $x_1,x_2,q_1\in\bbR^d$ with $C|x_1-x_2|\leq \gamma$  we have,
\begin{align}
\lb{cond5'}
V(x_1,(1+\gamma)q_1)&\geq V(x_1,q_1);
\\
\lb{cond5}
(1+\gamma)^2 V(x_1,q_1)+C|x_1-x_2|^2/\gamma+C\gamma&\geq V(x_2,(1+\gamma)q_1).
\end{align}
\end{enumerate}
\end{assumption}

Before stating the rest of our assumptions, we introduce a definition that we'll use throughout.
\begin{definition}[Associated function]\lb{Ass}
 For each open set $U\subseteq\R^d$ with $C^{1,1}$ boundary, we define its \emph{associated function} $p(x;U)$ to be the unique, non-trivial viscosity solution of \eqref{elliptic} 
with $0$ boundary data. 
\end{definition}

We proceed with:

\begin{assumption}[Further assumptions on $F$]
\label{assump3} 
We make the following additional assumptions on $F$.
 \begin{enumerate}[(i)]

\item \label{item:assump1.1 4} Let $\Omega\subseteq \R^{d+1}$ have  $C^{1,1}$ boundary. For each $t$, define  $p(x,t)=p(x,\Omega(t))$. Then $p>0$ in $\Omega$ and $\partial_t p$, $\nabla p$ exist on $\partial\Omega$ along non-tangential directions and are continuous.


\item \label{item:assump1.1 3} Let $r,\delta\in (0,1]$, and let $ U\subseteq \R^d$ be a bounded open set with $C^{1,1}$ boundary. There exists $C>0$ that depends only on $F$ and the diameter of $U$ such that if $Cr\leq \delta$ and $\nu\in \calB_r(0)\subseteq\R^d$, then  the following holds for $u_1$ solving \eqref{elliptic} in $U$, and $u_2$ solving
\[
F(x+\nu,p, \nabla p,D^2p)=0\quad\text{ in $U$},
\]
both with zero boundary condition: for any $x_0\in\partial U$, 
we have $(1+\delta)|\nabla u_2(x_0)|\geq |\nabla u_1(x_0)|$.

\end{enumerate}
\end{assumption}

\begin{remark}
    \label{rem:assumption lemmas}
We note that the condition in Assumption \ref{assump2}\ref{item:assump1.1 2} on $V$ is weaker than those assumed in \cite{turanova2025hele}; crucially, our framework allows $V$ to take negative values.  The monotonicity condition \eqref{cond5'} is essential in the proof of the comparison principle. In Lemma \ref{lem:exampleV}, we verify Assumption \ref{assump2}\ref{item:assump1.1 2} for a particular class of velocities $V$.

Assumption \ref{assump3}\ref{item:assump1.1 3} is a technical condition which requires a certain stability of the operator. Lemmas \ref{lem:exampleF} and \ref{L.2.13}  and Remark \ref{rem:more Fs}
 provide examples of $F$'s that satisfy Assumption  \ref{assump3}. 
 We leave the question of verifying Assumption \ref{assump3}\ref{item:assump1.1 3} for a wider class of  equations for future work.  
\end{remark}



\subsection{Literature review}
\label{ss:lit}

The mathematical study of Hele-Shaw  flows has  been an active area  for the past four decades; we do not attempt to present an exhaustive list of works on this topic, but rather focus on the ones most relevant to the present work. 
Elliott and Janovsk{\'y} established the first  well-posedness result for the Hele-Shaw problem by proving the existence and uniqueness of weak solutions in $H^1$ using a variational inequality approach \cite{EJ}. For classical solutions in multidimensional settings, Escher and Simonett obtained key short-time existence results \cite{escher1997classical}. A transformative shift occurred with Kim’s introduction of viscosity solutions to the Hele-Shaw and Stefan problems, providing a robust framework for handling interfaces that may develop singularities \cite{kim2003}.  Building on related geometric front propagation theories, such as those by Barles and Souganidis \cite{barles1998new} and by Andrews and Feldman \cite{andrews2002nonlocal}, Cardaliaguet and Rouy \cite{card2004} and Cardaliaguet and Ley \cite{card2007} introduced generalized set solutions for increasing flows. We also mention that Alazard and Koch \cite{alazard2023hele} took the approach of Baiocchi and Duvaut, transforming the problem to an obstacle problem, and proved well-posedness in a broad context. 

There is also a large body of work on the regularity of the Hele-Shaw problem and related PDEs, of which we, again, highlight just a few. Several works by Kim \cite{kim2} and Choi, Jerison, and Kim \cite{CJK, choi2009local} demonstrated that if the initial surface for the one-phase Hele-Shaw problem is Lipschitz, then the free boundary becomes instantly smooth. For the Muskat problem, Dong, Gancedo, and Nguyen established global well-posedness and regularity in two spatial dimensions \cite{Hongjie} and three spatial dimensions \cite{dong23}. Schwab, Tu, and Turanova extended the well-posedness theory for viscosity solutions to the one-phase Muskat problem across all dimensions \cite{schwab2024well}. Liang \cite{liang2026} obtained global well-posedness of Lipschitz free boundaries for the point injection problem in space dimension two. Further advancements include the work of Figalli, Ros-Oton, and Serra, who established generic regularity for free boundaries by leveraging the connection to the obstacle problem \cite{figalli2020generic}. Kim and Zhang \cite{kim2024regularity} and Zhang \cite{zhang2026} proved that flat free boundaries are non-degenerate and $C^1$, even in the presence of source and drift terms.

A major theme in research concerning \eqref{main} is the behavior of these flows in complex or heterogeneous media. Kim pioneered the study of homogenization in the context of contact angle dynamics \cite{contact} and, with Mellet, investigated Hele-Shaw problems in periodic and random media \cite{KimMellet2009}. Po{\v{z}}ar further extended these results to characterize the long-time behavior of flows in random media \cite{povzar2011long} and established homogenization results for spatiotemporal periodic media \cite{povzar2015homogenization}. Recently,  the authors studied a Hele-Shaw problem with both interior and free boundary oscillations, establishing well-posedness and stochastic homogenization in one spatial dimension \cite{turanova2025hele}.

The motivation for the general form of our main equation is deeply rooted in biological modeling, particularly the spread of cancer tumors in heterogeneous environments. The problem is closely related to the inhomogeneous porous medium equation (PME) where growth is constrained by local pressure \cite{SulakTuranova}. Perthame, Quir{\'o}s, and V{\'a}zquez established the Hele-Shaw asymptotics for mechanical models of tumor growth \cite{PQV}, a direction recently expanded by Sulak and Turanova to include inhomogeneous models and their incompressible limits \cite{SulakTuranova}. Further refinements in tumor modeling, including nutrient interactions and convective effects, have been studied by Jacobs, Kim, and Tong \cite{jacobs2022tumor} and David and Schmidtchen \cite{David_S}. Beyond biology, Craig, Kim, and Yao utilized these dynamics to model congested aggregation via Newtonian interactions, framing the evolution as a gradient flow \cite{CKY}. Other macroscopic models for crowd motion have similarly employed gradient flows  to describe congested population dynamics \cite{maury2010}.


\begin{remark}
    \label{rem:our paper}
    The present paper   generalizes the well-posedness results in the authors' previous work \cite{turanova2025hele} in two significant ways. First, \cite{turanova2025hele} concerns only one spatial dimension. Second, the free boundary velocity  in  \cite{turanova2025hele} is required to be positive. 
\end{remark}

\begin{remark}
    \label{rem:kim intro}
    One of the results in Kim's work \cite{contact} also proves well-posedness for Hele-Shaw problems with velocities that can take negative values. This remark is devoted to a summary of the differences between  \cite{contact} and the present work. 

    The well-posedness results in \cite{contact} concern \eqref{main}, but for $F(x,p, \nabla p, D^2p)$ replaced by $-\Delta p$ 
   and for slightly different hypotheses on $V(x,q)$. 
In particular, the hypothesis in \cite[Section 1]{contact} require the free boundary velocity $V$ to be Lipschitz continuous, monotone in $|q|$,  and such that, for all $q\in \R^d$, $V(x,q)-|q|$ is uniformly negative.

Another key difference between \cite{contact} and our work is that \cite[Theorem 1.8]{contact} requires one of the following two conditions to hold in order to establish uniqueness: the first is that the initial set is star-shaped, and the second is a condition on the slope of the solution $p$ at the free boundary at the initial time (this is analogous to the condition in our Theorem \ref{T.3.4}). We improve upon this by establishing, in Theorem \ref{thm:generic},  that there exists  a unique viscosity flow starting from  ``most" initial sets.
\end{remark}

\begin{remark}
    \label{rem:cardaliaguet}
 In this remark we discuss the similarities and differences between our work and Cardaliaguet and Ley's \cite{card2007}, which   studies flows 
 with evolution law being the sum of a curvature term and the classical (homogeneous) Hele-Shaw term.

Our notion of viscosity flows is conceptually close to that of generalized set solutions of \cite{card2007} because both   definitions and  are based on testing with smooth-enough sets from the inside and the outside. Nevertheless, our framework remains closely linked to the  definition of viscosity solutions for the Hele-Shaw problem. For instance, to check whether a given set satisfies our definition of, say, subflow at some $(x,t)$, the test set needs to touch the candidate set only up to time $t$. This is analogous to checking whether a function satisfies the definition of a viscosity, say, subsolution of the Hele-Shaw problem  or even of a general parabolic equation \cite{usersguide}. On the other hand, \cite{card2007} requires the test set to touch the candidate set globally in time. 

An important similarity between our methods and those of \cite{cardaliaguet2000front, card2004, card2007} is in the use of a variant of Ilmanen's interposition lemma \cite{Ilmanen}, which, for two given disjoint sets, provides a $C^{1,1}$ set that touches one of them from the outside and such that its translate touches the other  set from the inside (see Theorem \ref{P.3.9}). 

A significant difference between our proof and that of \cite{card2007} lies in how we perturb the set solutions. The regularized flows that we define correspond to  classical inf- and sup-convolution of functions, as described in Remark \ref{inf sup convolution}. This technique yields a strict advancement for the movement of sets --- in other words, if $\Omega$ is a subflow (resp. superflow), the boundary velocity of its regularization is strictly greater (resp. smaller) than that of $\Omega$ itself. The precise version of this statement can be found in Lemma \ref{L.3.6}, which is a key ingredient in our proof of the comparison principle. 
In contrast, the method of \cite{card2007} requires the construction a sequence of norms in $\mathbb{R}^{d+1}$ to establish a comparison principle  (see \cite[Section 3.2 and Lemma 9]{card2007}). Consequently, our approach is more direct and accommodates more general equations with $x$-dependence under certain conditions. 

\end{remark}


\subsection{Outline}
Section \ref{ss:viscosity} is devoted to the definitions of and lemmas about viscosity solutions and viscosity flows. In Section \ref{ss:comparison}, we prove the comparison principle for viscosity flows. In Section \ref{sec:existence and uniqueness}, we use the comparison principle to establish our well-posedness results.

\subsection{Acknowledgements}
 O. Turanova acknowledges support from NSF grant DMS-2204722. Y. P. Zhang acknowledges support from NSF CAREER grant DMS-2440215 and Simons Foundation Travel Support MPS-TSM-00007305. 

\section{Viscosity flows and viscosity solutions}\label{ss:viscosity}
We begin by introducing the definitions of viscosity flows (Section \ref{s.2.2}). Then we introduce  viscosity solutions and demonstrate the relationship between visocisty flows and viscosity solutions (Section \ref{ss:visc soln def}). In Section \ref{ss:regular property}
we state a known result on interposition of sets that we'll use in the proof of Theorem \ref{L.cp}. Finally, in Section \ref{S.ex} we provide several examples of $V$ and $F$ that satisfy our assumptions.

\subsection{Viscosity flows and viscosity solutions}\lb{s.2.2}

Departing from the method in \cite{kim2003}, our framework defines viscosity flows as the primary objects that generate viscosity solutions. Consequently, our definitions characterize space-time sets rather than functions. 
Thus, we begin with a number of relevant definitions. For a space-time set $\Omega\subseteq \R^{d+1}$, we define
\[
\Omega_\#(t):=\liminf_{s\to t}\Omega(s)=\bigcup_{c\to 0}\bigcap_{|s-t|<c}\Omega(s),\qquad
\Omega^\#(t):=\limsup_{s\to t}\Omega(s)=\bigcap_{c\to 0}\bigcup_{|s-t|<c}\Omega(s).
\]

\begin{definition}[Semicontinuity]
\begin{enumerate}[(i)]
\item We say that $\Omega$ is \emph{lower semicontinuous} if
\[
\Omega(t)\subseteq \Omega_\#(t) \qquad\text{   for each $t$}.
\]

\item We say that $\Omega$ is \emph{upper semicontinuous} if
\[
\Omega(t)\supseteq\Omega(t)^\#\qquad\text{   for each $t$}.
\]
\end{enumerate}
\end{definition}
In particular, for any $\Omega\subset \R^{d+1}$, we have that $\Omega_\#$ is lower semicontinuous and  $\Omega^\#$ is upper semicontinuous.

\begin{definition}[Regular sets]
\label{def:regset}
    \begin{enumerate}[(i)]
\item We call $\Phi\subseteq \R^{d+1}$ a \emph{regular set} if it is  open  and has $C^{1,1}$ boundary. For any $(x,t)\in\partial \Phi$, we use
\[
\nu^\Phi_{x,t} =(\nu_x^\Phi,\nu_t^\Phi)\in\bbS^{d}
\]
to denote the outward unit normal to $\Phi$ at $(x,t)$.


\item Let $\Psi$ be a regular set. If $\nu_x^\Phi\neq 0$, the \emph{normal velocity} $\calK^{\Phi}_{x,t}$ at $(x,t)\in\partial \Phi$ is defined by
\[
\calK^{\Phi}_{x,t} = -{\nu_t^\Phi}/{|\nu_x^\Phi|}.
\]
If $\nu_x=0$ and $\nu_t^\Phi>0$, we define $\calK^{\Phi}_{x,t}=-\infty$. And if $\nu_x=0$ and $\nu_t^\Phi<0$, we define $\calK^{\Phi}_{x,t}=\infty$.

\item Let $\Omega\subseteq\R^d\times [0,\infty)$ and let $t>0$. A regular set $\Phi$ is said to be \emph{externally tangent} to $\Omega$ at $(x,t)\in\partial{\Omega}$ if,  for some $0<t'<t$,
\[
\Omega(s)\subseteq \Phi(s)\text{ for all }s\in [t', t] \quad\text{and}\quad (x,t)\in\partial \Phi .
\]
 A regular set $\Phi$ is said to be \emph{internally tangent} to $\Omega$ at $(x,t)\in \partial \Omega$ if,   for some $0<t'<t$,
\[
\Phi(s)\subseteq \Omega(s)\text{ for all }s\in [t', t] \quad\text{and}\quad (x,t)\in\partial\Phi.
\]


\end{enumerate}
\end{definition}

\begin{definition}[Viscosity flows]
\lb{D.1.2}
Let $T>0$.
\begin{enumerate}[(i)]
\item \label{d:subf} A space-time upper semicontinuous set $\Omega $ is said to be a \emph{viscosity subflow} of \eqref{main} in $\R^d\times(0,T)$ if the following holds. 
For every regular set 
$\Phi\subseteq \bbR^{d+1}$, if $\Phi$ is externally tangent to $\Omega$  
        at $(x_0,t_0)\in \partial\Omega$ and $\nu_x^\Phi\neq 0$, then  we have
\[
 V(x_0,\nabla p(x_0;\Phi(t_0)))\geq \calK^{\Phi}_{x_0,t_0}.
\]

\item \label{d:supf} A space-time lower semicontinuous set $\Omega $ is said to be a \emph{viscosity superflow} of \eqref{main} in $\R^d\times(0,T)$ if the following holds. 
For every regular set 
$\Phi\subseteq \bbR^{d+1}$, if $\Phi$ is internally tangent to $\Omega$  
at $(x_0,t_0)\in \partial\Omega$ and $\nu_x^\Phi\neq 0$, then  we have
\[
 V(x_0,\nabla p(x_0;\Phi(t_0)))\leq \calK^{\Phi}_{x_0,t_0}.
\]



\item We say that a space-time set $\Omega$ is a \emph{viscosity flow} of \eqref{main} if it is a viscosity subflow and  $\Omega_\#$ a viscosity superflow of \eqref{main}.
\end{enumerate}
\end{definition}
    Here the values of $\nabla p(x;\Phi(t))$ are achieved in the sense of Assumption \ref{assump3} \ref{item:assump1.1 4}.
\begin{remark}[Regularity of test sets]
    \label{rem:after visc def}
Instead of requiring the test set $\Phi$ to be $C^{1,1}$ in space-time, we could instead only require $\Phi$ to have $C^1$ space-time boundary and satisfy $\partial\Phi(t_0)\in C^{1,1}$. Indeed, for such $\Phi$, the terms in the inequalities in Definition \ref{D.1.2} are well-defined. Moreover, since any $\partial\Phi\in C^1$ with $\partial\Phi(t_0)\in C^{1,1}$ can be approximated by $C^{1,1}$ space-time sets, Definition \ref{D.1.2} is equivalent to the definition obtained by requiring only $\partial\Phi\in C^1$ and $\partial\Phi(t_0)\in C^{1,1}$.
\end{remark}



\subsection{Viscosity solutions}
\label{ss:visc soln def}
We state the notion of viscosity solution to the generic problem \eqref{main}.

\begin{definition}[Viscosity solution]\lb{D.1.2'}
Let $D\subset\R^d$ be open and let $T>0$.
\begin{enumerate}
    \item A non-negative upper semicontinuous function $p$ defined in $D\times(0,T)$ is a \emph{viscosity subsolution} of \eqref{main} on $D\times(0,T)$ if for every $\phi\in C^{2,1}_{x,t}(D\times (0,T))$ such that $p-\phi$ has a local maximum in $\left(\overline{\Omega_p}\cap\{t\leq t_0\}\right)\cap\left(D\times (0,T)\right)$ at $(x_0,t_0)$, we have:
\begin{itemize}
    \item If $p(x_0,t_0)>0$, then $
    F(x_0,p(x_0,t_0),\nabla\phi(x_0,t_0), D^2\phi(x_0,t_0))\leq 0$ holds.
    \item If $p(x_0,t_0)=0$, then either 
    \begin{align*}
    F(x_0,p(x_0,t_0),\nabla\phi(x_0,t_0), D^2\phi(x_0,t_0))&\leq 0, \quad\text{or}\\
\phi_t(x_0,t_0)-V(x_0,{\nabla\phi}(x_0,t_0))|{\nabla\phi}(x_0,t_0)|&\leq 0.
        \end{align*}
\end{itemize}

\item  
A non-negative lower semicontinuous function $p$ defined in $D\times (0,T)$ is a {\it viscosity supersolution} of \eqref{main} on $D\times(0,T)$ if for every $\phi\in C^{2,1}_{x,t}(D\times (0,T))$ such that $p-\phi$ has a local minimum in $D\times\{t\leq t_0\}$ at $(x_0,t_0)$, we have,
\begin{itemize}
    \item If $p(x_0,t_0)>0$, then $
    F(x_0,p(x_0,t_0),\nabla\phi(x_0,t_0), D^2\phi(x_0,t_0))\geq 0$ holds.
    \item If $p(x_0,t_0)=0$, then either 
    \begin{align*}
F(x_0,p(x_0,t_0),\nabla\phi(x_0,t_0), D^2\phi(x_0,t_0))&\geq 0, \quad \text{or}\\
{\nabla\phi}(x_0,t_0)&= 0, \quad\text{or}\\
\phi_t(x_0,t_0)-V(x_0,{\nabla\phi}(x_0,t_0))|{\nabla\phi}(x_0,t_0)|&\geq 0.
        \end{align*}
\end{itemize}
\end{enumerate}

\end{definition}







For a function $p(x,t)$,
define the half-relaxed limits,
\[
p^\# (x,t):=\limsup_{y\to x,\,s\to t}p(y,s)\quad\text{and}\quad p_{\# }(x,t):=\liminf_{y\to x,\,s\to t}p(y,s).
\]

\begin{definition}[Viscosity solution]\lb{def23}
We say that a non-negative function $p$ is a \emph{viscosity solution} of \eqref{main} if $p^\# $ is a viscosity subsolution of \eqref{main} and $p=p_\# $ is a viscosity supersolution of \eqref{main}.
\end{definition}

We need the following definition to relate   viscosity solutions and  viscosity flows.

\begin{definition}[Associated sub- and super-functions]
Let $T>0$.
    \begin{enumerate}
        \item Let $\Omega$ be a viscosity subflow  on $\R^d\times(0,T)$. Its \emph{associated sub-function} $p$  is defined, for each $t\in (0, T)$, as
        \[
        p(x, t) = \sup\left\{ u(x) \, :\, 
        \begin{aligned}& \text{$u$ is upper semicontinuous and is a subsolution of \eqref{elliptic}}\\&\text{on $\Omega(t)$ and $u(x)=0$ on $\overline{\Omega(t)}^c$}\end{aligned}\right\}.
        \]
         \item Let $\Omega$ be a viscosity superflow  on $\R^d\times(0,T)$. Its \emph{associated super-function} $p$  is defined, for each $t\in (0, T)$, as
        \[
        p(x, t) = \inf\left\{  u(x)  \, :\, \begin{aligned}& \text{$u$ is lower semicontinuous  and is a supersolution of \eqref{elliptic}}\\&\text{on $\Omega(t)$ and $u(x)>0$ for all $x$ in the interior of $\Omega(t)$}\end{aligned}\right\}.
        \]
    \end{enumerate}
\end{definition}


If $\Omega$ has a $C^{1,1}$ boundary, then the associated sub- and super-functions agree with the associated function.

In the next lemma, we show that if we are able to find a viscosity subflow (resp. superflow), then it yields a viscosity subsolution (resp. supersolution).

\begin{lemma}[Viscosity solutions and viscosity flows]\lb{L.def}
Suppose Assumptions \ref{assump2} and \ref{assump3} hold. If $\Omega$ is a viscosity subflow (resp. superflow) on $\R^d\times(0,T)$, then its associated sub-function (resp. super-function) $p$ is a viscosity subsolution  (resp. supersolution) on $\R^d\times(0,T)$.
\end{lemma}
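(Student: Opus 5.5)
We treat the subflow case; the superflow case is symmetric, with inclusions and inequalities reversed. Let $\Omega$ be a viscosity subflow, let $p$ be its associated sub-function, and suppose $p-\phi$ has a local maximum at $(x_0,t_0)$ in $\overline{\Omega_p}\cap\{t\le t_0\}$. We may assume the maximum is strict and, after adding a constant to $\phi$, that $p(x_0,t_0)=\phi(x_0,t_0)$. We split into two cases, according to whether $p(x_0,t_0)>0$ or $p(x_0,t_0)=0$.

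\emph{Interior case, $p(x_0,t_0)>0$.} Then $x_0\in\Omega(t_0)$. By Perron's construction, $p(\cdot,t_0)$ is a viscosity subsolution of \eqref{elliptic} in $\Omega(t_0)$, since a supremum of subsolutions is a subsolution after upper semicontinuous envelope. The only issue is that the touching is in space-time restricted to $t\le t_0$, while the elliptic statement concerns the slice. Restricting $p-\phi$ to the slice $t=t_0$ still gives a local maximum in $x$. Hence $F(x_0,p,\nabla\phi,D^2\phi)\le 0$ follows directly from the slice-wise subsolution property. Here we need $p(\cdot,t_0)$ to be upper semicontinuous, which holds by construction.

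\emph{Free boundary case, $p(x_0,t_0)=0$.} Then $(x_0,t_0)\in\partial\Omega_p\subseteq\partial\Omega$. Assume $F(x_0,0,\nabla\phi,D^2\phi)>0$; we must show $\phi_t\le V(x_0,\nabla\phi)|\nabla\phi|$. If $\nabla\phi(x_0,t_0)=0$, this reduces to $\phi_t\le 0$. That inequality should come from $\phi\ge p\ge 0$ near the point for $t\le t_0$ with $\phi(x_0,t_0)=0$, which forces $\phi_t(x_0,t_0)\le 0$ by a one-sided time difference quotient. So suppose $\nabla\phi\neq0$.

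Near $(x_0,t_0)$, the set $\Phi:=\{\phi>0\}$ (suitably perturbed) is a regular set containing $\Omega(s)$ for $s\in[t',t_0]$ near $x_0$. Its normal velocity is $\calK^\Phi=\phi_t/|\nabla\phi|$, and $\nu_x\neq0$. We then localize $\Phi$ to a bounded $C^{1,1}$ set, equal to $\{\phi>0\}$ near $x_0$ and containing $\Omega$ elsewhere; this is possible because $\Omega$ is bounded, and we can enlarge $\Phi$ away from $x_0$ by a smooth cutoff. We modify $\phi$ to $\phi+\eps(|x-x_0|^2+(t_0-t))$ to make the touching strict. The subflow property then gives $V(x_0,\nabla p(x_0;\Phi(t_0)))\ge \phi_t/|\nabla\phi|$.

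It remains to compare $\nabla p(x_0;\Phi(t_0))$ with $\nabla\phi(x_0,t_0)$. Since $F(x_0,0,\nabla\phi,D^2\phi)>0$, the function $\phi$ is a strict supersolution near $x_0$. Using comparison on $\Phi(t_0)\cap\calB_r(x_0)$ together with the fact that $\phi\ge p$, we aim to show $|\nabla p(x_0;\Phi(t_0))|\le|\nabla\phi|$, after scaling $\phi$ by $(1+\gamma)$ if needed. The monotonicity \eqref{cond5'} then yields $V(x_0,\nabla\phi)\ge V(x_0,\nabla p(x_0;\Phi(t_0)))$, and hence $\phi_t\le V(x_0,\nabla\phi)|\nabla\phi|$. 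This gradient comparison is the main obstacle. Comparing $\phi$ with the associated function of the enlarged set $\Phi(t_0)$ is only local, and the correct direction of the inequality is delicate. I would first try to build the test set from a sub-level set of $\phi-\eps|x-x_0|^2$ so that $\phi$ dominates the associated function on a small ball. I would then use the Hopf-type behaviour from Assumption \ref{assump3}\ref{item:assump1.1 4} to pass to the limit $\eps\to0$ with continuity of $V$.
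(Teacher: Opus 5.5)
Your interior case is fine, but the free-boundary case has two genuine gaps.

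\textbf{The gradient comparison.} The first gap is the one you flag yourself: the inequality $|\nabla p(x_0;\Phi(t_0))|\le|\nabla\phi(x_0,t_0)|$ is never established, and with the test set you build it cannot be. You take $\Phi(t_0)$ equal to $\{\phi(\cdot,t_0)>0\}$ only near $x_0$ and enlarge it elsewhere by a cutoff. But $p(\cdot;\Phi(t_0))$ is determined by the whole domain. By comparison, enlarging $\Phi(t_0)$ away from $x_0$ can only \emph{increase} $|\nabla p(x_0;\Phi(t_0))|$, which is the wrong direction.

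A comparison on $\Phi(t_0)\cap\calB_\rho(x_0)$ gives you no control on $\partial\calB_\rho(x_0)\cap\Phi(t_0)$. There $\phi$ is small near the free boundary while the associated function of the big set need not be. Perturbing $\phi$ by $\eps|x-x_0|^2$ only changes second-order behaviour at $x_0$ and does not supply this missing boundary control.

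The missing idea is to tie the test set to the test function rather than enlarging the set by hand. Since $F(x_0,0,\nabla\phi,D^2\phi)>0$ and $\nabla\phi\neq0$, the paper modifies $\phi$ away from $(x_0,t_0)$ so that:
\begin{itemize}
\item $F(x,\phi,\nabla\phi,D^2\phi)\ge0$ everywhere;
\item $\{\phi=0\}$ is globally $C^{2,1}$;
\item $\{\phi>0\}\supseteq\{p>0\}\cap\{t\le t_0\}$.
\end{itemize}
It then takes $\Phi:=\{\phi>0\}$ itself. Now $\phi(\cdot,t_0)$ is a supersolution on all of $\Phi(t_0)$ that vanishes on its boundary. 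Global comparison gives $\phi(\cdot,t_0)\ge p(\cdot;\Phi(t_0))$. Since both functions vanish at $x_0$ with gradients along the outer normal, the gradient inequality follows, and \eqref{cond5'} together with the subflow inequality finishes the argument.

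\textbf{The degenerate case.} The second gap is the case $\nabla\phi(x_0,t_0)=0$. The maximum of $p-\phi$ is taken only over $\overline{\Omega_p}\cap\{t\le t_0\}$, so you know $\phi\ge p\ge0$ only on $\overline{\Omega_p}$. If the positive set is advancing through $x_0$, the points $(x_0,t)$ with $t<t_0$ lie outside $\overline{\Omega_p}$. Your one-sided time difference quotient then says nothing about the sign of $\phi_t(x_0,t_0)$. With $\nabla\phi(x_0,t_0)=0$ you would need points of $\overline{\Omega_p}$ at time $t$ within $o(\sqrt{t_0-t})$ of $x_0$. That is a speed bound which would itself have to be extracted from the subflow property.

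The paper avoids this altogether, in four steps:
\begin{enumerate}
\item It passes to $\Omega_\eps=\bigcup_t(\calB_\eps(0)+\Omega(t))\times\{t\}$.
\item It shows $\Omega_\eps$ is a subflow for $F_\eps=\inf_{\calB_\eps(x)}F$ and $V_\eps=\sup_{\calB_\eps(x)}V$.
\item Each slice of $\Omega_\eps$ has interior balls of radius $\eps$, so by Hopf's lemma its associated sub-function is non-degenerate. Hence every test function touching it from above at a free boundary point has nonzero gradient, and the first case applies.
\item It lets $\eps\to0$ by stability. The condition $\psi_t-V|\nabla\psi|\le0$ survives the limit even at points where $\nabla\psi=0$.
\end{enumerate}
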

\begin{proof}
We only prove the first claim. The second claim follows similarly, and is actually simpler as it requires only the first step of the following proof.

Let $\Omega$ be a viscosity subflow. Since $\Omega$ is upper semicontinuous, $p(x,t)$ is upper semicontinuous. 
Let $\phi\in C^{2,1}_{x,t}(D\times (0,T))$ be such that $p-\phi$ has a local maximum in $\left(\overline{\Omega_p}\cap\{t\leq t_0\}\right)\cap\left(D\times (0,T)\right)$ at $(x_0,t_0)$. Since $p(\cdot,t)$ is a solution  to $F=0$ in its positive set, we only need to consider the case when $p(x_0,t_0)=0$. Hence, due to Assumption \ref{assump3}\ref{item:assump1.1 4}, we have $(x_0,t_0)\in\partial\Omega$.  Without loss of generality, we can assume $\phi(x_0,t_0)=0$.

\medskip
\noindent {\bf Step 1.} In this step, we assume ${\nabla\phi}(x_0,t_0)\neq 0$. It then suffices to show 
\[
\phi_t(x_0,t_0)-V(x_0,{\nabla\phi}(x_0,t_0))|{\nabla\phi}(x_0,t_0)|\leq 0,
\]
whenever the following holds
\[
    F(x_0,0,\nabla\phi(x_0,t_0), D^2\phi(x_0,t_0))> 0\quad\text{and}\quad {\nabla\phi}(x_0,t_0)\neq 0.
\]
Since ${\nabla\phi}(x_0,t_0)\neq 0$, the level set $\phi=0$ is $C^{2,1}_{x,t}$ locally near $(x_0,t_0)$.

By continuity  of the operator $F$, we can modify $\phi$  outside a neighborhood of $(x_0,t_0)$ such that
\[
F(x,\phi(x,t),\nabla\phi(x,t), D^2\phi(x,t))\geq 0 \quad \text{  for all $(x,t)$},
\]
the level set $\phi=0$ is $C^{2,1}_{x,t}$ globally, and $\Phi:=\{\phi>0\}$ contains  $\{p>0\}\cap \{t\leq t_0\}$.  It then suffices to show
$\calK^{\Phi}_{x_0,t_0}\leq V(x_0,{\nabla\phi}(x_0,t_0))$.

Now, the comparison principle for $F$ in the domain $\Phi(t_0)$ yields
$\phi(x,t_0)\geq p(x;\Phi(t_0))$.
Since both $\nabla \phi(x_0,t_0)$ and $\nabla p(x_0;\Phi(t_0))$ are pointing in the outer normal direction at $x_0\in\partial\Phi(t_0)$, we have
\[
|\nabla p(x_0;\Phi(t_0))|\leq |\nabla\phi(x_0,t_0)| .
\]
Hence, the monotonicity of $V$ --- see assumption \eqref{cond5'}  ---  yields
\[
V(x_0,\nabla p(x_0;\Phi(t_0)))\leq V(x_0,{\nabla\phi}(x_0,t_0)).
\]

Furthermore, we note that $\Phi$ is externally tangent to $\Omega$ at $(x_0,t_0)$. Since, 
 by  assumption,  $\Omega$ is a viscosity subflow, we get
\[
\calK^{\Phi}_{x_0,t_0}\leq V(x_0,\nabla p(x_0;\Phi(t_0))). 
\]
Combining the two previous inequalities  yields $\calK^{\Phi}_{x_0,t_0}\leq V(x_0,{\nabla\phi}(x_0,t_0))$,  finishing the proof.

\medskip
\noindent {\bf Step 2.} For the general case, for any $\eps\in (0,1)$, let us consider 
\[
\Omega_\eps:=\bigcup_{t\in [0,T)} (\calB_\eps(0)+\Omega(t))\times\{t\}.
\]
We claim that $\Omega_\eps$ is a viscosity subflow of 
\begin{equation}\lb{maineps}
\left\{
\begin{aligned}
    &F_\eps(x,p,\nabla p,D^2 p)=0 &&\quad\text{ in }\{p>0\},\\
  & \partial_t p=V_\eps(x,  \nabla_x p)| \nabla_x p| &&\quad\text{ on }\partial\{p>0\}
\end{aligned}   
\right.
\end{equation}
where
\[
F_\eps(x,p,q,X):=\inf_{y\in \calB_\eps(x)}F(y,p,q,X)\quad\text{and}\quad V_\eps(x,q):=\sup_{y\in \calB_\eps(x)}V(y,q).
\]
Indeed, suppose that a regular set $\Phi_\eps$ is externally tangent to $\Omega_\eps$ at $(z_\eps, t_\eps)\in\partial\Omega_\eps$ and $\nu_x^{\Phi_\eps}\neq 0$. Let $z_0\in \partial\Omega$ be such that $|z_0-z_\eps|=\eps$. Then for $\nu_0:=z_0-z_\eps$, by the definition of $\Omega_\eps$, we have
$\Phi_0:=\Phi_\eps+(\nu_0, 0)$ is externally tangent to $\Omega$. Hence, the definition of viscosity subflow yields
\begin{align}\lb{2.10}
 V(z_0,\nabla p(z_0;\Phi_0(t_\eps)))\geq \calK^{\Phi_0}_{z_0,t_\eps}.
\end{align}
Let $\tp_\ep$ be the unique viscosity solution of $F_\ep(x, \tp_\ep, \nabla \tp_\ep, D^2p_\ep)=0$ in the domain $\Phi_\eps(t_\eps)$ with $0$ boundary value. And, let $\bar p(x;\Phi_0(t_\ep))$ be as given by Definition \ref{Ass}; i.e. $\bar p(x;\Phi_0(t_\ep))$ is the unique viscosity solution of \eqref{elliptic} in $\Phi_0(t_\ep)$ with 0 boundary value. The definition of $F_\ep$ yields that $\bar p(x+\nu_0;\Phi_0(t_\ep))$ is a subsolution of $F_\eps=0$ in the domain  $\Phi_\eps(t_\eps)$. Note that both $\nabla \tp_\eps (z_\eps)$ and $\nabla \bar p(z_0;\Phi_0(t_\eps))$ point in the outer normal direction of $\Phi_\eps(t_\eps)$ at $z_\eps$. And, the comparison principle  implies that
$\tp_\eps(x)\geq \bar p(x+\nu_0;\Phi_0)$  for $x\in\Phi_\eps(t_\eps)$.
Therefore, we find $|\nabla \tp_\eps (z_\eps)|\geq | \nabla \bar p(z_0;\Phi_0(t_\eps))|$.
This, the definition of $V_\ep$,  and the monotonicity of $V$ imply
\[
V_\eps(z_\eps,\nabla \tp_\eps(z_\eps)) \geq V(z_0,\nabla \tp_\eps(z_\eps))\geq V(z_0,\nabla \bar p(z_0;\Phi_0(t_\eps))).
\]
Since $\Phi_0=\Phi_\eps+(\nu_0, 0)$, we have $\calK^{\Phi_0}_{z_0,t_\eps}=\calK^{\Phi_\eps}_{z_\eps,t_\eps}$. Thus, we conclude from the previous line and \eqref{2.10} that 
\[
V_\eps(z_\eps,\nabla \tp_\eps(z_\eps))\geq \calK^{\Phi_\eps}_{z_\eps,t_\eps}
\]
which proves the claim.

Now, let $p_\ep$ denote the associated sub-function of $\Omega_\ep$. We shall use Step 1 to conclude that $p_\eps$ is a viscosity subsolution to \eqref{maineps}. Indeed, since $\Omega_\eps(t)$ satisfies the interior ball condition for each $t\in (0,T)$,  the Hopf maximum principle implies that  $p_\ep(\cdot, t)$ is non-degenerate near the boundary of  $\Omega_\eps(t)$. Consequently, if $\phi\in C^{2,1}_{x,t}(D\times (0,T))$ is such that $p_\ep-\phi$ has a local maximum in $\left(\overline{\Omega_p}\cap\{t\leq t_0\}\right)\cap\left(D\times (0,T)\right)$ at some $(x_0,t_0)\in \partial \Omega_\ep$, the condition $\nabla\phi(x_0,t_0)\neq 0$ must hold, and thus the arguments in Step 1 apply.

Finally, let $p$ be the associated sub-function  of $\Omega$. If we let $\tilde p(x,t)$ be the limit of $p_\eps$ as $\eps\to 0$, then $\tilde p(\cdot,t)=0$ on $\overline{\Omega(t)}^c$. Also, since $\tilde p(\cdot,t)$ is a subsolution to \eqref{elliptic} on $\Omega(t)$ by the convergence of $\Omega_\eps\to \Omega$ and $F_\eps\to F$, we obtain $\tilde p(\cdot,t)\leq p(\cdot,t)$. On the other hand, the definition of $\Omega_\eps$ yields $p_\eps(x,t)\geq p(x,t)$. Thus, we get $\tilde p(\cdot,t)= p(\cdot,t)$.
Moreover, since $V_\eps\to V$, $F_\eps\to F$ and $p_\eps(\cdot,t)\to p(\cdot,t)$ locally uniformly, we pass $\eps\to 0$ and apply the stability property of viscosity solutions to get that
$p$ is indeed a viscosity subsolution. More precisely, if a test function $\psi$ touches $p$ from above at $(x_0,t_0)$ for $t\leq t_0$, then it touches $p_\eps$ from above locally at some $(x_\eps,t_\eps)$ for $t\leq t_\eps$, and $(x_\eps,t_\eps)\to (x_0,t_0)$ as $\eps\to 0$. Since $p_\eps$ is a viscosity subsolution of \eqref{maineps}, then either one of the following holds:
\begin{align*}
F_\eps(x_\eps, p_\eps(x_\eps,t_\eps), \nabla \psi(x_\eps,t_\eps), D^2\psi(x_\eps,t_\eps))&\leq 0\quad\text{or}\\
\partial_t\psi_\eps(x_\eps,t_\eps)-V_\eps(x_\eps,{\nabla\psi}(x_\eps,t_\eps))|{\nabla\psi}(x_\eps,t_\eps)|&\leq 0.
\end{align*}
Passing $\eps\to 0$ yields the conclusion.

\end{proof}

\subsection{An interposition result}
\label{ss:regular property}

The following result allows for two ``nice" sets to be placed between two disjoint sets. It follows directly from applying \cite[Theorem 3.3(i)]{card2007} with $\Omega_1\cap \R^d\times [0,t_1]$ and $\Omega_2\cup \R^d\times (\R\backslash [0,t_2])$ in place of $C_1$ and $C_2$, respectively, and after accounting for our definitions of external and internal tangency, which differ slightly from those used in \cite{card2007}.

\begin{theorem}[Interposition of sets]\lb{P.3.9}
Let $\Omega_1,\Omega_2\subseteq \bbR^{d+1}$, and $(x_1,t_1)\in \overline{\Omega_1}$ and $(x_2,t_2)\in \overline{\Omega_2}$ satisfy
\begin{enumerate}
    \item $\Omega_1\subseteq \Omega_2$, $\Omega_1$ is compact, and $\Omega_2$ is open;

    \item $|(x_1,t_1)-(x_2,t_2)|=d(\Omega_1\cap \R^d\times [0,t_1],\overline{\Omega_2^c}\cap \R^d\times [0,t_2])$ and $x_1\neq x_2$.
\end{enumerate}
Then there exists a compact set $\Sigma_1\subseteq\R^{d+1} $ with $C^{1,1}$ boundary such that $\Sigma_1$ is externally tangent to $\Omega_1$ at $(x_1,t_1)$, and $\Sigma_2 := \Sigma_1 + (x_2,t_2) - (x_1,t_1)$
is internally tangent to $\Omega_2$ at $(x_2,t_2)$.
\end{theorem}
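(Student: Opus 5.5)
The plan is to reduce Theorem \ref{P.3.9} to \cite[Theorem 3.3(i)]{card2007}. We apply that result with
\[
C_1:=\Omega_1\cap \big(\R^d\times [0,t_1]\big),\qquad C_2:=\Omega_2\cup \big(\R^d\times (\R\backslash [0,t_2])\big).
\]
We then translate its conclusion into our notions of external and internal tangency.

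First we check the hypotheses of \cite[Theorem 3.3(i)]{card2007}.
\begin{enumerate}[(a)]
\item $C_1$ is compact, since it is the intersection of the compact set $\Omega_1$ with a closed slab.
\item $C_2$ is open, since $\Omega_2$ is open and $\R^d\times(\R\backslash[0,t_2])$ is open.
\item $C_1\subseteq C_2$, since $\Omega_1\subseteq\Omega_2$.
\item The distance between the two sets is attained at the given points. We have $\overline{C_2^c}\subseteq\overline{\Omega_2^c}\cap \R^d\times[0,t_2]$, so
\[
d(C_1,C_2^c)\;\geq\; d\big(\Omega_1\cap \R^d\times [0,t_1],\ \overline{\Omega_2^c}\cap \R^d\times [0,t_2]\big)=|(x_1,t_1)-(x_2,t_2)|.
\]
To get equality, we will confirm that $(x_1,t_1)\in C_1$ and $(x_2,t_2)\in\overline{C_2^c}$, using the closedness of $\Omega_1$ and the minimizing property of the pair.
\item The pair is not separated purely in time. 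The condition $x_1\neq x_2$ says exactly that the vector $(x_2-x_1,t_2-t_1)$ has nonzero spatial component. This is the nondegeneracy condition needed so that the interposed set is not a time slab, and it gives $\nu_x\neq 0$ at the contact points.
\end{enumerate}

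Next we invoke \cite[Theorem 3.3(i)]{card2007}. It yields a compact set $\Sigma_1$ with $C^{1,1}$ boundary such that $C_1\subseteq\Sigma_1$ and $(x_1,t_1)\in\partial\Sigma_1$. It also yields that the translate $\Sigma_2=\Sigma_1+(x_2,t_2)-(x_1,t_1)$ lies in $\overline{C_2}$ with $(x_2,t_2)\in\partial\Sigma_2$; depending on the exact formulation, one may need to pass to the interior or use a slight shrinking.

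Finally we unwind the definitions.
\begin{enumerate}[(a)]
\item External tangency. Since $C_1\subseteq\Sigma_1$, we get $\Omega_1(s)\subseteq\Sigma_1(s)$ for all $s\le t_1$, in particular on some interval $[t',t_1]$. Together with $(x_1,t_1)\in\partial\Sigma_1$, this is external tangency at $(x_1,t_1)$ in the sense of Definition \ref{def:regset}.
\item Internal tangency. Since $\Sigma_2\subseteq C_2$, we get $\Sigma_2(s)\subseteq\Omega_2(s)$ for $s\in[0,t_2]$. This holds because the added slab $\R^d\times(\R\backslash[0,t_2])$ only affects other times.
\end{enumerate}
This is the sense in which our local-in-time tangency notion is weaker than the global one in \cite{card2007}, and it is what makes the time truncation legitimate.

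The main obstacle is a bookkeeping issue: matching open versus closed inclusions and the exact normalization in \cite{card2007}. The interposed set there typically satisfies $C_1\subseteq\Sigma_1$ and $\Sigma_2^\circ\subseteq C_2$ or similar, whereas our Definition \ref{def:regset} requires regular sets to be open. If needed, we will take interiors and check that $C^{1,1}$ regularity of the boundary is preserved; the tangency point still lies on the boundary. Near $t=0$, where the slab boundary meets $\Omega_2$, internal tangency only requires containment on $[t',t_2]$ for some $t'>0$, so we can choose $t'$ close to $t_2$ and avoid that region.
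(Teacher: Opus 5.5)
Your proposal is correct and follows the same route as the paper: apply \cite[Theorem 3.3(i)]{card2007} with $C_1=\Omega_1\cap(\R^d\times[0,t_1])$ and $C_2=\Omega_2\cup(\R^d\times(\R\backslash[0,t_2]))$, then convert the resulting global-in-time tangency into the paper's local-in-time external and internal tangency. Your hypothesis checks and your remarks on open versus closed containment only make explicit the step the paper summarizes as ``accounting for our definitions''. One reading you rely on, as the paper does, is that $(x_1,t_1)$ and $(x_2,t_2)$ are themselves the points attaining the distance, so $(x_2,t_2)\in\Omega_2^c$; the stated hypotheses do not literally force this.
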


%


\subsection{Examples}\lb{S.ex}

In this section we discuss several typical examples of $V$ and $F$ that satisfy Assumption \ref{assump2}.

\begin{lemma}[Example velocity $V$]
\label{lem:exampleV}
    Let $a(x)\geq c>0$ and $b(x)$ be uniformly bounded and Lipschitz continuous. 
    There exists $C>0$, depending only on $c$, $\|a\|_{\Lip}$, and $\|b\|_{\infty}+\|b\|_{\Lip}$, such that $V$ defined by  $V(x,q)=a(x)|q|+b(x)$ satisfies Assumption \ref{assump2}\ref{item:assump1.1 2}.
\end{lemma}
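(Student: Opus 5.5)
We verify the two inequalities \eqref{cond5'} and \eqref{cond5} directly, with $V(x,q)=a(x)|q|+b(x)$. Throughout, write $L_a=\|a\|_{\Lip}$, $L_b=\|b\|_{\Lip}$ and $M=\|b\|_\infty$, and take $\gamma_0=1$ (any fixed bound works). Continuity of $V$ is immediate from continuity of $a$ and $b$.

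\textbf{Monotonicity \eqref{cond5'}.} Since $a\geq c>0$, we have $V(x_1,(1+\gamma)q_1)-V(x_1,q_1)=\gamma a(x_1)|q_1|\geq 0$. This holds for every $\gamma>0$ and involves no constant.

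\textbf{Inequality \eqref{cond5}.} Set $s=|q_1|$ and $h=|x_1-x_2|$, so that $h\le\gamma/C$. We must show
\[
(1+\gamma)^2\big(a(x_1)s+b(x_1)\big)+Ch^2/\gamma+C\gamma\;\geq\;(1+\gamma)a(x_2)s+b(x_2).
\]
We split the difference LHS$-$RHS into an $s$-part and an $s$-independent part.

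\emph{The $s$-part.} Its coefficient of $s$ is
\[
(1+\gamma)\big[(1+\gamma)a(x_1)-a(x_2)\big]\;\ge\;(1+\gamma)\big[\gamma a(x_1)-L_a h\big]\;\ge\;(1+\gamma)\big[\gamma c-L_a\gamma/C\big].
\]
This is nonnegative as soon as $C\ge L_a/c$. This step is the main point: the gain $\gamma a(x_1)|q|$ from the factor $(1+\gamma)$ must absorb the spatial variation of $a$, uniformly in $|q|$. That is exactly why the constraint $C|x_1-x_2|\le\gamma$ is needed, and why the lower bound $c$ on $a$ appears in the constant.

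\emph{The $s$-independent part.} It equals $\big((1+\gamma)^2-1\big)b(x_1)+b(x_1)-b(x_2)+Ch^2/\gamma+C\gamma$. Using $\gamma<1$, we bound
\[
\big|\big((1+\gamma)^2-1\big)b(x_1)\big|\le 3\gamma M,\qquad |b(x_1)-b(x_2)|\le L_b h\le L_b\gamma/C.
\]
So this part is at least $C\gamma-3M\gamma-L_b\gamma$, which is nonnegative once $C\ge 3M+L_b$; the term $Ch^2/\gamma$ is simply dropped as nonnegative.

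\textbf{Choice of constant.} Taking $C=\max\{L_a/c,\;3M+L_b,\;1\}$ satisfies both requirements. This $C$ depends only on $c$, $\|a\|_{\Lip}$ and $\|b\|_\infty+\|b\|_{\Lip}$, as claimed.
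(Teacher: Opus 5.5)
Your proof is correct and is essentially the paper's argument: you expand the difference, let the gain $\gamma a(x_1)|q|$ absorb the Lipschitz variation of $a$ via $C|x_1-x_2|\le\gamma$, and bound the $b$-terms by $3\gamma\|b\|_\infty$ and $\|b\|_{\Lip}|x_1-x_2|$. The differences are cosmetic: you also check \eqref{cond5'} explicitly, and by factoring out $(1+\gamma)$ you get the slightly sharper requirement $C\ge\|a\|_{\Lip}/c$ instead of $2\|a\|_{\Lip}/c$.
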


\begin{proof}
It suffices to verify \eqref{cond5}.
For any $x_1, x_2$ such that $C|x_1-x_2|\leq \gamma$, we have
    \begin{align*}
&(1+\gamma)^2 V(x_1,q)+C|x_1-x_2|^2/\gamma+C\gamma-V(x_2,(1+\gamma)q)\\
& = (1+\gamma)^2a(x_1)|q|+(1+\gamma)^2b(x_1) +C|x_1-x_2|^2/\gamma+C\gamma - a(x_2)(1+\gamma)|q|-b(x_2)\\
& = |q|\gamma a(x_1) +|q|(1+\gamma)(a(x_1)-a(x_2))+ \gamma^2|q|a(x_1) +b(x_1)-b(x_2)+ (2\gamma +\gamma^2) b(x_1)+ \\
&\quad \quad +C|x_1-x_2|^2/\gamma+C\gamma\\
&\geq c\gamma|q|+C|x_1-x_2|^2/\gamma+C\gamma-3\gamma\|b\|_\infty-(2|q|\|a\|_{\Lip}+\|b\|_{\Lip})|x_1-x_2|.
\end{align*}
If $C$ is large enough, we have,
\[
C|x_1-x_2|^2/\gamma+C\gamma-3\gamma\|b\|_\infty-(\|b\|_{\Lip})|x_1-x_2|\geq 0,
\]
and, since $C|x_1-x_2|\leq \gamma$,
\[
c\gamma|q|-2|q|\|a\|_{\Lip}|x_1-x_2|\geq c\gamma|q|-2|q|\|a\|_{\Lip}\gamma/C\geq 0.
\]
Combining the two inequalities yields \eqref{cond5}.
\end{proof}

Assumption \ref{assump3}\ref{item:assump1.1 3} is a certain continuity dependence of coefficients property of the elliptic operator $F$. It is not easy to verify this assumption in general; in the next two lemmas, we provide two examples of operators for which Assumption \ref{assump3}\ref{item:assump1.1 3} holds. 

\begin{lemma}[Example of $F$]\lb{L.2.12}
\label{lem:exampleF}
Let $F(x,z,q,X):=-\tr (X)+v\cdot q+b(x)z-a(x)$, where $v\in\R^d$, $a(x)$ and $b(x)$ are uniformly bounded and Lipschitz continuous, and $a(x)\geq c>0$. Then $F(x,z,q,X)$ satisfies Assumption \ref{assump3}\ref{item:assump1.1 4}\ref{item:assump1.1 3}.  
\end{lemma}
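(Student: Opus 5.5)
The plan is to check the two parts of Assumption \ref{assump3} separately. For this linear operator, part \ref{item:assump1.1 3} reduces to a one-line comparison argument, while part \ref{item:assump1.1 4} is a regularity statement for a linear Dirichlet problem on moving $C^{1,1}$ domains. Write $L_\nu u:=-\Delta u+v\cdot\nabla u+b(x+\nu)u$. Then $u_1$ solves $L_0u_1=a(x)$ and $u_2$ solves $L_\nu u_2=a(x+\nu)$, both in $U$ with zero boundary data. Throughout, I use the comparison principle for $L_0$ on bounded domains (maximum principle for the linear operator). This is implicit in the uniqueness part of Assumption \ref{assump2}\ref{item:assump1.1.1} and holds, for instance, when $b\ge 0$ or when $b^-$ is small relative to the diameter of $U$.

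\textbf{Part \ref{item:assump1.1 3}.} First I obtain an $L^\infty$ bound $0\le u_2\le M$, with $M$ depending only on $\|a\|_\infty$, $\|b\|_\infty$, $|v|$ and $\mathrm{diam}\,U$. This comes from comparison with an explicit barrier such as $A-B e^{\lambda x_1}$, or with a quadratic. Next I test $w:=(1+\delta)u_2$ in the equation for $u_1$. Writing $\ell$ for the Lipschitz constants of $a$ and $b$, we have
\[
L_0 w=(1+\delta)\big(a(x+\nu)+(b(x)-b(x+\nu))u_2\big)\ \ge\ (1+\delta)\big(a(x+\nu)-\ell r M\big).
\]
The right-hand side is at least $a(x)$ as soon as $\delta\big(c-\ell rM\big)\ge \ell r+\ell rM$. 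Since $r\le 1$ and $Cr\le\delta$, this holds once $C$ is chosen large in terms of $c$, $\ell$ and $M$. (If $\ell rM\ge c/2$, then $r\gtrsim 1/C$ is impossible for large $C$.) Hence $w$ is a supersolution of $L_0u=a$ with zero boundary data. Comparison gives $u_1\le(1+\delta)u_2$ in $U$. Both functions vanish on $\partial U$ and are positive inside, so comparing difference quotients along the inner normal at $x_0\in\partial U$ gives $|\nabla u_1(x_0)|\le(1+\delta)|\nabla u_2(x_0)|$. The gradients exist up to the boundary by $C^{1,\alpha}$ boundary regularity on $C^{1,1}$ domains.

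\textbf{Part \ref{item:assump1.1 4}.} Positivity of $p$ in $\Omega(t)$ follows from the strong maximum principle, since $a\ge c>0$ makes $p$ a nonnegative, nontrivial supersolution of the homogeneous equation. For regularity, I would locally flatten $\partial\Omega$ near a boundary point $(x_0,t_0)$. Because $\partial\Omega$ is $C^{1,1}$ in space-time, there is a $C^{1,1}$ family of diffeomorphisms $\Psi_t$ mapping a fixed half-ball onto $\Omega(t)$ near $x_0$. The transformed functions $q(\cdot,t)=p(\Psi_t(\cdot),t)$ solve a divergence or nondivergence equation whose coefficients are Lipschitz in $(y,t)$. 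Uniform $C^{1,\alpha}$ estimates up to the flat boundary give continuity of $t\mapsto q(\cdot,t)$ in $C^{1,\alpha}$. Applying the same estimates to the difference quotients $(q(\cdot,t+h)-q(\cdot,t))/h$, which solve an equation with bounded right-hand side, gives existence and continuity of $\partial_t q$ and $\nabla q$ up to the boundary. Transforming back yields existence and continuity of $\partial_t p$ and $\nabla p$ on $\partial\Omega$ along non-tangential directions. On the boundary, $p=0$ forces $\partial_t p=\calK^\Omega|\nabla p|$, and this is consistent with the formula.

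I expect the main obstacle to be this time-regularity step in part \ref{item:assump1.1 4}, specifically obtaining the difference-quotient estimates uniformly up to the boundary. Part \ref{item:assump1.1 3} is essentially the short comparison above.
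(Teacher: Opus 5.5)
Your proposal is correct and follows the paper's proof. Part (i) is classical linear boundary regularity; the paper simply cites it, so your flattening sketch only expands on that. Part (ii) uses the same ingredients: an $L^\infty$ bound, a $(1+\delta)$-multiple of one solution shown to be a supersolution of the other equation, then comparison and reading off the boundary gradients. The only difference is that you test $(1+\delta)u_2$ in the unshifted equation, which gives the stated inequality $|\nabla u_1|\le(1+\delta)|\nabla u_2|$ directly. The paper tests $(1+\delta)u_1$ in the shifted equation and gets $u_2\le(1+\delta)u_1$, which matches the stated form only after translating $U$ by $\nu$ and replacing $\nu$ with $-\nu$.
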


\begin{proof}
Assumption \ref{assump3}\ref{item:assump1.1 4} follows from the classical regularity theory for linear elliptic equations (see e.g., Theorem 4.2 in Chapter 5 of \cite{elliptic}). 


Now, we show that $u:=(1+\delta)u_1$ is a supersolution to $F(x+\nu,u, \nabla u,D^2u)=0$. Once this is done, the comparison principle yields that $(1+\delta)u_1\geq u_2$ in $U$. This concludes the proof that $F$ satisfies Assumption \ref{assump3}\ref{item:assump1.1 3}.

By \cite[Chapter 5, Theorem 1.1]{elliptic}, we know that $\sup_{U}|u_1|\leq C$ for some $C$ depending only on $d$, the coefficients, and the diameter of $U$.
Direct computation yields
\begin{align*}
F(x+\nu,u, \nabla u,D^2u)&=(1+\delta)(-\Delta u_1(x)+v\cdot\nabla u_1(x)+b(x+\nu)u_1(x))-a(x+\nu) .
\end{align*}
Since $-\Delta u_1(x)+v\cdot\nabla u_1(x)+b(x)u_1(x)-a(x)=0$, using the assumptions, we get
\[
F(x+\nu,u, \nabla u,D^2u)\geq c\delta -|\nu|((1+\delta)\|u_1\|_\infty\|b\|_{\Lip}+\|a\|_{\Lip})
\]
which is non-negative if $\delta\geq Cr$ with $C$ depending only on $d$, the coefficients, and the diameter of $U$.
\end{proof}

\begin{remark}
    \label{rem:more Fs}
We remark that operators of the following form satisfy Assumption \ref{assump3}\ref{item:assump1.1 4}\ref{item:assump1.1 3} as well. Under the hypotheses of Lemma \ref{L.2.12}, let $T:\bbR^d\to \bbR^d$ be $C^2_{\rm loc}$, invertible, and such that $\|T^{-1}\|_{\rm Lip}<\infty$, and take
\beq\lb{Fy}
\widetilde F(y,z,q,X):=-\text{tr}\left(A(y) X\right) + \widetilde{v}(y) \cdot q + \tilde{b}(y)z - \tilde{a}(y) 
\eeq
where  $\tilde{b}(y) = b(T^{-1}(y))$, $\tilde{a}(y) = a(T^{-1}(y))$, and
\[
A(y) = DT(x) DT(x)^T \Big\vert{}_{x = T^{-1}(y)},\quad \widetilde{v}(y) = DT(x) v - \Delta T(x) \Big\vert{}_{x = T^{-1}(y)}.
\]
Indeed, suppose $u_1(y)$ is a solution to \eqref{Fy} and $u_2$ is a solution to $\widetilde F(y+\nu,u_2,Du_2,D^2u_2)=0$. Then $w_i(x):=u_i(T(x))$, with $i=1,2$, satisfy $F(x,w_1,Dw_1,D^2w_1)=0$ and 
$F(T^{-1}(T(x)+\nu),w_2,Dw_2,D^2w_2)=0$. By our assumption on $T$, we have
\[
|T^{-1}(T(x)+\nu)-x|= |T^{-1}(T(x)+\nu)-T^{-1}(T(x))|\leq \|T^{-1}\|_{\rm Lip}|\nu|.
\]
The proof of Lemma \ref{L.2.12} yields $(1+\delta)w_1\geq w_2$ and so $(1+\delta)u_1\geq u_2$. Hence, $\widetilde F$ satisfies Assumption \ref{assump3}\ref{item:assump1.1 3}. And, it's easy to verify that Assumption \ref{assump3}\ref{item:assump1.1 4} is satisfied as well.
\end{remark}





\begin{lemma}[Example of $F$ in one spatial dimension]\lb{L.2.13}
Suppose $d=1$ and $R>1$. Let $F(x,z,q,X):=-\tr (X)+c(x) q+b(x)z-a(x)$, where $a(x),b(x)$ and $c(x)$ are uniformly bounded in $C^1$, and $\inf_{x}a(x),\inf_{x}b(x)>0$. Then $F(x,z,q,X)$ satisfies Assumption \ref{assump3}\ref{item:assump1.1 3} as long as $U\subseteq (-R,R)$.  
\end{lemma}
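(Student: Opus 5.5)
The plan is to follow the proof of Lemma \ref{L.2.12}: show that $u:=(1+\delta)u_1$ is a supersolution of the shifted equation $F(x+\nu,u,u',u'')=0$ in $U$. The comparison principle then gives $(1+\delta)u_1\ge u_2$ in $U$. Since both functions vanish at $x_0\in\partial U$ and are positive inside, comparing one-sided derivatives at $x_0$ gives $(1+\delta)|u_1'(x_0)|\ge|u_2'(x_0)|$.

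First I reduce to intervals. Since $d=1$, $U$ is bounded and open with $C^{1,1}$ boundary, its boundary consists of isolated points, so $U$ is a finite disjoint union of intervals in $(-R,R)$. The equation decouples on each component, so it suffices to work on one interval $I=(\alpha,\beta)$ with $\beta-\alpha\le 2R$. On $I$ the equation is the linear ODE $u''=c(x)u'+b(x)u-a(x)$ with bounded $C^1$ coefficients, so $u_1,u_2\in C^2(\overline I)$. Because $\inf b>0$, the comparison principle holds for the operator, with $b$ or with its shift.

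Second, I establish the uniform bounds that replace the $L^\infty$ bound used in Lemma \ref{L.2.12}, since here the drift $c$ varies with $x$ and I must also control $|u_1'|$.
\begin{itemize}
\item By the maximum principle, $0\le u_1\le \sup a/\inf b$. Indeed, the constant $\sup a/\inf b$ is a supersolution and $0$ is a subsolution, since $a>0$.
\item At an interior maximum point $x^*$ of $u_1$ we have $u_1'(x^*)=0$. Writing $w=u_1'$, the function $w$ solves $w'=cw+(bu_1-a)$ with bounded right-hand side. Gronwall's inequality over a distance at most $2R$ gives $|u_1'|\le M$ on $\overline I$, where $M$ depends only on $R$ and the $C^0$ norms of $a,b,c$ (together with $\inf b$).
\end{itemize}
I expect this gradient bound to be the main point. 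It is where the restriction $U\subseteq(-R,R)$ enters, and it must hold uniformly even for very short intervals; Gronwall gives this, since the bound only improves as the interval gets shorter.

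Third, I compute directly, using $-u_1''+c(x)u_1'+b(x)u_1=a(x)$:
\[
F(x+\nu,u,u',u'')=(1+\delta)\bigl[a(x)+(c(x+\nu)-c(x))u_1'+(b(x+\nu)-b(x))u_1\bigr]-a(x+\nu).
\]
Bounding the differences by the $C^1$ norms and $|\nu|<r$, and using $a(x)-a(x+\nu)\ge -\|a\|_{C^1}r$, this gives
\[
F(x+\nu,u,u',u'')\ \ge\ \delta\inf a-r\Bigl(\|a\|_{C^1}+2\bigl(\|c\|_{C^1}M+\|b\|_{C^1}\tfrac{\sup a}{\inf b}\bigr)\Bigr).
\]
Here the factor $2$ absorbs $1+\delta\le 2$. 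The right-hand side is nonnegative as soon as $\delta\ge Cr$, with $C$ depending only on the coefficients and $R$. This is exactly the requirement $Cr\le\delta$ in Assumption \ref{assump3}\ref{item:assump1.1 3}.

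Finally, comparison on $I$ between the supersolution $(1+\delta)u_1$ and the solution $u_2$, which share zero boundary data, yields $(1+\delta)u_1\ge u_2\ge 0$. Since both functions vanish at $x_0$ and are $C^1$ up to the boundary, this gives the claimed gradient inequality at every $x_0\in\partial U$.
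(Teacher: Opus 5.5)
Your proposal is correct and follows essentially the same route as the paper. Both arguments bound $0\le u_1\le \sup a/\inf b$, then bound $|u_1'|$ uniformly starting from an interior critical point, then check directly that $(1+\delta)u_1$ is a supersolution of the shifted equation once $\delta\ge Cr$. The only minor difference is the derivative bound: you use Gronwall, which needs only $\|c\|_\infty$ but costs a factor $e^{2R\|c\|_\infty}$, whereas the paper integrates the equation and integrates the $c\,u'$ term by parts, using $c\in C^1$ to get a bound linear in $R$.
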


\begin{proof}
Let $u=p(\cdot;U)$ be as in Definition \ref{Ass}, with $U$ an open interval in $(-R,R)$.
First of all, the comparison principle yields that $0\leq u\leq  \sup_x\frac{a(x)}{b(x)}$. Next, because $u=0$ on the two ends of the interval of $U$ and $u\geq 0$, there exists $x_0\in U$ such that $u_x(x_0)=0$. It follows from the equation that for any $x_1\in U$,
\[
u_x(x_1)=\int_{x_0}^{x_1} c(x)u_x(x)+b(x)u(x)-a(x) dx.
\]
Hence,
\[
u_x(x_1)= c(x_1)u(x_1)-c(x_0)u(x_0)+\int_{x_0}^{x_1} -c_x(x)u(x)+b(x)u(x)-a(x) dx.
\]
Since $a,b,c,c_x$ and $u$ are finite on $(-R,R)$, this implies that
$|u_x|$ is uniformly bounded in $U$ independent of $U$.

Now, we take $\nu\in\R$ and $|\nu|<r$. It suffices to show that $\tilde u:=(1+\delta)u$ is a supersolution to $F(x+\nu,u, \nabla u,D^2u)=0$ when $1>\delta\geq C'r$ for some $C'$ independent of $U$. Indeed,
\begin{align*}
F(x+\nu,\tilde u, \nabla \tilde u,D^2\tilde u)&=(1+\delta)(-u_{xx}+c(x+\nu)u_x+b(x+\nu)u)-a(x+\nu)    \\
&\geq (1+\delta)(-u_{xx}+c(x)u_x+b(x)u)-a(x)\\
&\qquad-(2\|c\|_{\Lip}\|u_x\|_\infty+2\|b\|_{\Lip}\|u\|_\infty+\|a\|_{\Lip})r\\
&\geq \delta a(x)-C(\|u_x\|_\infty+\|u\|_\infty+1)r.
\end{align*}
Note that $\inf_x a>0$ and we've already shown that $u$ is uniformly  $C^1$ independent of $U$. Hence, we proved that $\tilde u$ is a supersolution if $C'r\leq \delta<1$ for some $C'$.
\end{proof}

\section{Comparison principle for viscosity flows}
\label{ss:comparison}

In this section, we establish the comparison principle for viscosity flows  and well-posedness of viscosity solutions  of \eqref{main}.

Similarly to the setting of free boundary problems in \cite{cbook}, we need to regularize our viscosity sub- and super- flows to establish comparison. Section \ref{ss:reg flows} is devoted to  defining  the regularizations and establishing some of their important properties. In particular, Lemmas \ref{L.3.4}, \ref{L.3.3}, and \ref{L.3.6} are at the heart of the proof of the comparison principle (Theorem \ref{L.cp}) in Section \ref{ss:comp and wp}.  

\subsection{Regularized flows}
\label{ss:reg flows}

Let two bounded sets $\Omega_1$ and $\Omega_2$  be, respectively, viscosity sub- and superflows to \eqref{main} 
in $\R^d \times (0,T)$. For some $r,\delta\in (0,1)$ and $h\in [0,1)$, we rescale the set in time as follows:
\beq\lb{5.0}
\Omega_1^\delta :=\left\{(x,(1+\delta)^{2}t)\,:\,  (x,t)\in\Omega_1\right\},
\quad
\Omega_2^\delta :=\left\{(x,t)\,:\,  (x,(1+\delta)^{2}t)\in\Omega_1\right\},
\eeq
It is direct to check that $\Omega_1^\delta$ and $\Omega_2^\delta$ are, respectively, viscosity sub- and superflows to \eqref{main} with $V$ replaced by $(1+\delta)^{-2}V$ and ${(1+\delta)^{2}}V$.
Next, for  $h\in (0,r/T)$, we define $\Omega_1^{\delta,r,h}$ to be a neighborhood of $\Omega_1^\delta $,
\beq\lb{5.1}
\Omega_1^{\delta,r,h}:=\left\{(x,t)\, \big\vert \, t<r/h,\, |(x,t)-(y,s)|<r- ht,\,(y,s)\in\Omega_1^\delta \right\},
\eeq
and we define $\Omega_2^{\delta,r,h}$ to be an interior subset of $\Omega_2^\delta $,  
\beq\lb{5.2}
\Omega_2^{\delta,r,h}:=\left\{(x,t) \, \big\vert \, t<r/h,\, |(x,t)-(y,s)|>r-ht,\,(y,s)\in (\Omega_2^\delta )^c\right\}.
\eeq
Since we can rewrite
\begin{align*}
\Omega_1^{\delta,r,h}&=\bigcup_{s\in (0,T),|(\Delta x,\Delta t)|<r-h(s+\Delta t)}\left\{ (x+\Delta x,t+\Delta t)\,:\, (x,t)\in \Omega_1^\delta,\, t<s\right\}, \\
\Omega_2^{\delta,r,h}&=\bigcap_{s\in (0,T),|(\Delta x,\Delta t)|<r-h(s+\Delta t)}\left\{ (x+\Delta x,t+\Delta t)\,:\, (x,t)\in \Omega_2^\delta,\, t<s\right\},
\end{align*}
we see that $\Omega_1^{\delta,r,h}$ is upper semicontinuous and $\Omega_2^{\delta,r,h}$ is lower semicontinuous.

\begin{remark}
    \label{inf sup convolution}
The definitions of $\Omega_1^{\delta,r,h}$ and $\Omega_2^{\delta,r,h}$ are closely related to the classical inf- and sup-convolution of functions. Suppose that $\Omega_1^\delta$ is a regular set. We can let $p_1$ be the associated function of $\Omega_1^\delta$ (as in Definition \ref{Ass}) and define the sup-convolution of $p_1$ as
\[
v_1(x,t):=\sup_{|(y,s)-(x,t)|<r -ht}p_1(y,s).
\]
Then it can be verified directly that $\Omega_1^{\delta,r,h}=\{v_1>0\}$. Similarly, if $\Omega_2^\delta$ is regular and $p_2$ is its associated function, then $\Omega_2^{\delta,r,h}=\{v_2>0\}$, where
\[
v_2(x,t):=\inf_{|(y,s)-(x,t)|<r -ht}p_2(y,s).
\]
\end{remark}

Before proceeding, we introduce one more definition. 
\begin{definition}[Regular free boundary points]Let $\Omega\subset \R^d\times (0, T)$ and let $(x_0,t_0)\in\partial\Omega$ with $t_0\in (0,T)$.
\begin{enumerate}
    \item 
We say that $(x_0,t_0)$ is {\it regular from the interior} if there exists a space-time ball $B\subseteq \Omega$ such that $\overline{\Omega^c}\cap \overline B=\{(x_0,t_0)\}$. 

\item We say that $(x_0, t_0)$ is {\it regular from the exterior} if there exists a space-time ball $B\subseteq \Omega^c$ such that $\overline\Omega\cap \overline B=\{(x_0,t_0)\}$. 
\item We say that $(x_0, t_0)$ is {\it regular} if it is both regular from the interior and from the exterior.
\end{enumerate}
\end{definition}
In Lemma  \ref{L.3.1} below, we'll show that the boundary points of $\Omega_1^{\delta,r,h}$ are regular from the interior, and the boundary points of $\Omega_2^{\delta,r,h}$ are regular from the exterior.


\begin{lemma}[Regular free boundary points]\lb{L.3.1}
Let $T>0$ and $\delta, r, h\in [0,1)$, and let two bounded sets $\Omega_1$ and $\Omega_2$  be, respectively, viscosity sub- and superflows to \eqref{main} 
in $\R^d \times (0,T)$. Let $\Omega_i^\delta$ and $\Omega_i^{\delta,r,h}$, for $i=1,2$, be as defined in  \eqref{5.0}-\eqref{5.2}.

\begin{enumerate}[(i)]
\item \lb{L.3.1.i}If $(x_1,t_1)\in\partial \Omega_1^{\delta,r,h}$ and $(y_1,s_1)\in\partial\Omega_1^\delta$ satisfy
$\left|(y_1,s_1)-(x_1,t_1)\right|=r-ht_1$,
then
$(x_1,t_1)$ is regular from the interior of $\Omega_1^{\delta,r,h}$, and $(y_1,s_1)$ is regular from the exterior of $\Omega_1^\delta$.

Moreover, one unit outer normal vector of $\Omega_1^{\delta,r,h}$ at $(x_1,t_1)$ is given by $\nu_1/|\nu_1|$ where
\[
\nu_1:=\left({x_1-y_1},{t_1-s_1+h(r-ht_1)}\right).
\]

\item \lb{L.3.1.ii} If $(x_2,t_2)\in\partial \Omega_2^{\delta,r,h}$ and $(y_2,s_2)\in\partial\Omega_2^\delta$ satisfy
$\left|(y_2,s_2)-(x_2,t_2)\right|=r-ht_2$,
then
$(x_2,t_2)$ is regular from the exterior of $\Omega_2^{\delta,r,h}$, and $(y_2,s_2)$ is regular from the interior of $\Omega_1^\delta$.

Moreover, one unit outer normal vector of $\Omega_2^{\delta,r,h}$ at $(x_2,t_2)$ is given by $\nu_2/|\nu_2|$ where
\[
\nu_2:=\left({y_2-x_2},{s_2-t_2-h(r-ht_2)}\right).
\]
\end{enumerate}
\end{lemma}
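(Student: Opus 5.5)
Part (i) only uses the geometry of the definition \eqref{5.1}. Part (ii) is symmetric: complements exchange the roles of interior and exterior. So we treat (i) in detail.

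Write $\rho(t):=r-ht$. Then $\Omega_1^{\delta,r,h}=\{(x,t): t<r/h,\ \exists (y,s)\in\Omega_1^\delta,\ |(x,t)-(y,s)|<\rho(t)\}$. Fix $(x_1,t_1)\in\partial\Omega_1^{\delta,r,h}$ and $(y_1,s_1)\in\partial\Omega_1^\delta$ with $|(y_1,s_1)-(x_1,t_1)|=\rho(t_1)$.

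\emph{Regularity of $(y_1,s_1)$ from the exterior of $\Omega_1^\delta$.} The point $(x_1,t_1)$ is not in the open set $\Omega_1^{\delta,r,h}$, so $\Omega_1^\delta$ does not meet the open ball $B_{\rho(t_1)}((x_1,t_1))$. Hence this ball lies in $(\Omega_1^\delta)^c$ and has $(y_1,s_1)$ on its boundary. To make $(y_1,s_1)$ the unique contact point, we shrink it: take the ball $B'$ of radius $\rho(t_1)/2$ centered at the midpoint of $(x_1,t_1)$ and $(y_1,s_1)$. Then $\overline{B'}\setminus\{(y_1,s_1)\}\subset B_{\rho(t_1)}((x_1,t_1))$, so $\overline{\Omega_1^\delta}\cap\overline{B'}=\{(y_1,s_1)\}$.

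\emph{Regularity of $(x_1,t_1)$ from the interior of $\Omega_1^{\delta,r,h}$.} The natural interior set is
\[
E:=\{(x,t):\ |(x,t)-(y_1,s_1)|<\rho(t)\}.
\]
This is the region inside a cone with apex on the hyperplane $t=r/h$ and axis through $(y_1,s_1)$, since the radius shrinks linearly with slope $h<1$. The point $(y_1,s_1)$ lies in $\overline{\Omega_1^\delta}$ but not necessarily in $\Omega_1^\delta$. However, points of $\Omega_1^\delta$ approach $(y_1,s_1)$, and the union of the corresponding sets $E$ is contained in $\Omega_1^{\delta,r,h}$. Strictly inside, the slightly smaller $E$ for $(y_1,s_1)$ is therefore contained in $\Omega_1^{\delta,r,h}$, and $(x_1,t_1)\in\partial E$.

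$E$ is a cone, so its boundary is smooth away from the apex, and $(x_1,t_1)$ is not the apex because $\rho(t_1)>0$. Hence there is a ball $B\subseteq E\subseteq\Omega_1^{\delta,r,h}$ tangent to $\partial E$ at $(x_1,t_1)$; shrinking it makes $(x_1,t_1)$ its only boundary point touching $\overline{(\Omega_1^{\delta,r,h})^c}$. One must also check $t_1<r/h$; this holds because otherwise $\rho(t_1)\le0$.

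\emph{Normal vector.} $\partial E$ is the zero set of
\[
g(x,t)=|(x,t)-(y_1,s_1)|-(r-ht).
\]
Its gradient at $(x_1,t_1)$ is
\[
\Big(\tfrac{x_1-y_1}{\rho(t_1)},\ \tfrac{t_1-s_1}{\rho(t_1)}+h\Big),
\]
which is proportional to $\nu_1=(x_1-y_1,\ t_1-s_1+h\rho(t_1))$. The ball $B$ touches $\partial E$ there, and $\Omega_1^{\delta,r,h}$ is squeezed between $B$ and the complement. The complement of $\Omega_1^{\delta,r,h}$ is in turn controlled because $\Omega_1^{\delta,r,h}$ avoids the region where $g>0$ for all $y$, or simply because of the interior ball. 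So $\nu_1/|\nu_1|$ is an outer normal in the sense of supporting balls. We interpret ``one unit outer normal'' in this weak sense, as the normal of an interior tangent ball, so no exterior control is needed.

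\emph{Part (ii).} Apply the same reasoning to $(\Omega_2^\delta)^c$, which appears in \eqref{5.2} exactly as $\Omega_1^\delta$ appears in \eqref{5.1}. The complement of $\Omega_2^{\delta,r,h}$ is the $\rho(t)$-neighborhood of $(\Omega_2^\delta)^c$, up to closure issues. Interior and exterior swap, and the normal changes sign, which gives $\nu_2$.

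The main obstacle is the closure/openness bookkeeping: $(y_1,s_1)$ lies only in the closure of $\Omega_1^\delta$, and one must still show $E\subseteq\Omega_1^{\delta,r,h}$ and that $\overline B$ meets the complement only at $(x_1,t_1)$. I would handle this by the approximation and shrinking argument above, using strict inequalities.
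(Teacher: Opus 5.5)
Your proposal is correct and follows the paper's proof. The ball $B_{r-ht_1}((x_1,t_1))\subseteq(\Omega_1^\delta)^c$ touching at $(y_1,s_1)$ gives exterior regularity there. The set $E$, the paper's $A_1$, gives interior regularity at $(x_1,t_1)$; the paper writes $A_1$ with $\le$, so it would contain $(x_1,t_1)\notin\Omega_1^{\delta,r,h}$, and your open version, with $(y_1,s_1)$ approximated by points of $\Omega_1^\delta$, is the right way to get $E\subseteq\Omega_1^{\delta,r,h}$. The vector $\nu_1$ is the gradient of $g$, and (ii) follows by symmetry.

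One small correction: $E$ is not a cone. It is a solid spheroid with focus $(y_1,s_1)$, directrix $\{t=r/h\}$ and eccentricity $h<1$. So $\partial E$ is smooth everywhere (indeed $|\nabla g|\ge 1-h>0$ on $\partial E$), which only makes your interior-ball step easier.
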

\begin{proof}
We only provide the proof of \ref{L.3.1.i}, as the proof of \ref{L.3.1.ii} is very similar. Note that $(y_1,s_1)\in\partial\Omega_1^\delta$ and 
\[
(y_1,s_1)\in \left(\partial B_{r-ht_1}((x_1,t_1))\right)\bigcap (\partial\Omega_1^\delta).
\]
Thus, $(y_1,s_1)$ is regular  from exterior of $\Omega_1^\delta$.
Moreover, if  letting
\[
A_1:=\left\{(x,t)\,:\,  |(x,t)-(y_1, s_1)|\leq r-ht\right\},
\]
we have $A_1\subseteq \Omega_1^{\delta,r,h}$ and $(x_1,t_1)\in(\partial A_1)\cap (\partial \Omega_1^{\delta,r,h})$. Hence, $(x_1, t_1)$ is regular from the interior of $\Omega_1^{\delta,r,h}$.
Since $(y_1,s_1)$ is in the interior of $\Omega_1^{\delta,r,h}$, a direct computation yields that the outer normal vector of $\partial A_1$ at $(x_0,t_0)$ is parallel to
$
\left({x_1-y_1},{t_1-s_1+h(r-ht_1)}\right)$.
\end{proof}

The next lemma gives us even more information about regular points of $\Omega_1^{\delta, r, h}$ and $\Omega_1^{\delta, r, h}$. Namely, if we happen to know that $\nu_i$ is the \emph{unique} normal vector of $\Omega_i^{\delta, r, h}$ for $i=1,2$, and that $\nu_1$ and $\nu_2$ point in the same direction, then we can conclude that $x_1\neq x_2$. In particular, we find that the exterior tangent plane of $\Omega_1^{\delta,r,h}$ at $(x_1,t_1)$ and the interior tangent plane of $\Omega_2^{\delta,r,h}$ at $(x_2,t_2)$ are not perpendicular to $(0,1)$.

Notice that $x_1\neq x_2$ is a hypothesis of Theorem \ref{P.3.9} (interposition of sets); both that theorem and this lemma will play key roles in the proof of our comparison result.

\begin{lemma}[Tangent planes not perpendicular]
    \label{L.3.4}
Suppose that the hypotheses of Lemma \ref{L.3.1}\ref{L.3.1.i}\ref{L.3.1.ii}  hold. Suppose also that  $\nu_i$ is the \emph{unique} outer normal vector for $\Omega_i^{\delta, r, h}$ at $(x_i, t_i)$, for $i=1,2$, and that, moreover, 
\beq\lb{3.1}
\nu/|\nu|=\nu_1/|\nu_1|=\nu_2/|\nu_2|
\eeq
where 
\beq
\label{nu}
\nu:=(x_2-x_1,t_2-t_1)\neq 0.
\eeq
Then we have $x_1\neq x_2$. Furthermore, 
\beq
\label{Ksigma<infty 1}
|\calK^{\Sigma_1}_{x_1,t_1}|,|\calK^{\Sigma_2}_{x_2,t_2}|<\infty.
\eeq
\end{lemma}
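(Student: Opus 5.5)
We argue by contradiction and suppose $x_1=x_2$. Then $\nu=(0,t_2-t_1)$ with $t_2\neq t_1$. By \eqref{3.1}, both normals $\nu_1$ and $\nu_2$ are parallel to $(0,1)$ or to $(0,-1)$. Their spatial components must therefore vanish. From the explicit formulas in Lemma~\ref{L.3.1}, this gives $x_1=y_1$ and $x_2=y_2$. Writing $\rho_i:=r-ht_i$, the relations $|(y_i,s_i)-(x_i,t_i)|=\rho_i$ then force $s_i=t_i\pm\rho_i$.

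We now determine the signs. The time component of $\nu_1$ is $t_1-s_1+h\rho_1$, so:
\begin{itemize}
\item if $s_1=t_1+\rho_1$, it equals $-(1-h)\rho_1<0$;
\item if $s_1=t_1-\rho_1$, it equals $(1+h)\rho_1>0$.
\end{itemize}
Similarly, the time component of $\nu_2$ is $s_2-t_2-h\rho_2$, which is $(1-h)\rho_2>0$ if $s_2=t_2+\rho_2$ and $-(1+h)\rho_2<0$ if $s_2=t_2-\rho_2$. We use $h<1$ and $\rho_i>0$ throughout. The latter holds since $t_i<r/h$, which follows from the definitions \eqref{5.1}--\eqref{5.2}.

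Suppose first that $t_2>t_1$. Then \eqref{3.1} says both normals point in direction $(0,1)$, so $s_1=t_1-\rho_1$ and $s_2=t_2+\rho_2$. In this configuration the outer normal of $\Omega_1^{\delta,r,h}$ at $(x_1,t_1)$ points forward in time, with $(y_1,s_1)\in\partial\Omega_1^\delta$ located directly below it.

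We then examine the set $A_1$ from the proof of Lemma~\ref{L.3.1}. Since $(y_1,s_1)\in\overline{\Omega_1^\delta}$, the points $(y_1,s)$ with $s$ slightly less than $s_1$ are presumably approached by $\Omega_1^\delta$. Their $(r-ht)$-neighborhoods then cover a cone-like region around $(x_1,t_1)$. The claim that $\nu_1$ is the \emph{unique} outer normal should be contradicted by the geometry of the ball of radius $\rho_1$ centered at a boundary point. At a point of $\partial B_{\rho_1}$ where the normal is vertical, the union of the balls centered at nearby points of $\Omega_1^\delta$ has a corner unless $\Omega_1^\delta$ is flat there. Uniqueness of the normal then pins $\Omega_1^\delta$ near $(y_1,s_1)$ to lie below a horizontal hyperplane $\{s\le s_1\}$ to second order.

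Combining this with the corresponding information for $\Omega_2$, and using $\Omega_1\subseteq\Omega_2$ and $(x_1,t_1),(x_2,t_2)$ being the minimal-distance pair, we obtain a closer pair. Concretely, shifting $(x_1,t_1)$ along the common horizontal tangent plane strictly decreases the distance unless the two sets touch at a horizontal boundary point at the same time level. That would contradict $t_2\neq t_1$.

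The case $t_2<t_1$ is symmetric. There the normals point in direction $(0,-1)$, so $s_1=t_1+\rho_1$ and $s_2=t_2-\rho_2$. We then apply the same minimal-distance argument. We expect this geometric step, showing that a vertical normal is incompatible with uniqueness of the normal combined with distance minimality, to be the main obstacle. The first attempt will be to show directly that it forces $\Omega_1^\delta$ to exit the region $\{t\le s_1\}$, using upper semicontinuity of $\Omega_1^\delta$.

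Once $x_1\neq x_2$ is known, \eqref{3.1} shows that $\nu_1$ and $\nu_2$ have nonzero spatial components. Let $\Sigma_1,\Sigma_2$ be the sets produced by Theorem~\ref{P.3.9}. They are tangent at $(x_1,t_1)$ and $(x_2,t_2)$ to sets whose normal there is unique. Hence their normals coincide with $\nu/|\nu|$, whose spatial part $(x_2-x_1)/|\nu|$ is nonzero. Therefore $\calK^{\Sigma_i}$ is the finite ratio $-\nu_t/|\nu_x|$, which gives \eqref{Ksigma<infty 1}.
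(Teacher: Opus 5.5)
Your reduction is correct: if $x_1=x_2$, then \eqref{3.1} forces $\nu_1,\nu_2$ to be vertical, so $x_i=y_i$ and $s_i=t_i\pm\rho_i$. Your sign bookkeeping is also right: $s_1=t_1-\rho_1$, $s_2=t_2+\rho_2$ when $t_2>t_1$, and $s_1=t_1+\rho_1$, $s_2=t_2-\rho_2$ when $t_2<t_1$. Your final derivation of \eqref{Ksigma<infty 1} from $x_1\neq x_2$ matches the paper.

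The gap is the core step. You try to exclude vertical normals by geometry alone: uniqueness of the normal, semicontinuity, minimality of the distance, and $\Omega_1\subseteq\Omega_2$. You never use that $\Omega_1^\delta$ is a subflow and $\Omega_2^\delta$ a superflow. No argument of that kind can succeed, because the configuration is geometrically consistent. Suppose that near the relevant points $\Omega_1^\delta$ and $\Omega_2^\delta$ are spatial cylinders that simply end at times $a<b$, say locally $\{|x|<1,\ t<a\}\subseteq\{|x|<10,\ t<b\}$ with $b-a$ suitably chosen. Then:
\begin{itemize}
\item the tops of $\partial\Omega_1^{\delta,r,h}$ and $\partial\Omega_2^{\delta,r,h}$ are flat horizontal pieces;
\item the outer normals there are unique and equal to $(0,1)$;
\item every vertical pair realizes the minimal distance, so $x_1=x_2$ with $t_2>t_1$.
\end{itemize}
Sliding along the common horizontal plane does not decrease the distance, so your ``closer pair'' claim fails. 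Knowing that $\Omega_1^\delta$ lies below $\{s\le s_1\}$ gives nothing. Also, in the case $t_2>t_1$ you look for the contradiction in $\Omega_1$. But there $(y_1,s_1)$ is the \emph{bottom} point of an exterior ball, which imposes no constraint on a subflow, since tangency is only tested backward in time.

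The missing idea, which is how the paper argues, is that a vertical normal at $(y_i,s_i)$ produces a test set with infinite normal velocity. The flow inequalities forbid this, because $\nabla p$, and hence $V(y_i,\nabla p)$, is finite by Assumption~\ref{assump3}\ref{item:assump1.1 4}.

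If $t_2<t_1$, then $s_1=t_1+\rho_1$, so $(y_1,s_1)$ is the \emph{top} point of $B_1:=B_{\rho_1}((x_1,t_1))\subseteq(\Omega_1^\delta)^c$. The set $\Phi:=\calB_R(y_1)\times(0,t_1+1)\setminus B_1$ is externally tangent to $\Omega_1^\delta$ at $(y_1,s_1)$ with outer normal $(0,-1)$, so $\calK^\Phi_{y_1,s_1}=+\infty$. This contradicts $(1+\delta)^{-2}V(y_1,\nabla p(y_1;\Phi(s_1)))\ge\calK^\Phi_{y_1,s_1}$: a subflow cannot close a hole instantaneously.

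If $t_2>t_1$, then $s_2=t_2+\rho_2$, so $(y_2,s_2)$ is the top point of $B_{\rho_2}((x_2,t_2))\subseteq\Omega_2^\delta$. This ball is an internally tangent test set with $\calK=-\infty$, contradicting $(1+\delta)^2V\le\calK$: a superflow cannot vanish instantaneously.

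In each case only one of the two sets is used, and it is the sub- or superflow property, not the geometry, that does the work.
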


\begin{proof}
Let us begin with the case $t_1=t_2$.  Since, according to  \eqref{nu}, we have $\nu=(x_2-x_1,t_2-t_1)\neq 0$, we find that $x_1\neq x_2$ must hold.

 Thus, we are left to consider the case $t_1\neq t_2$. We will establish
\beq\lb{3.2}
\text{if $t_1>t_2$ then $x_1\neq y_1$ and if $t_1<t_2$ then $x_2\neq y_2$},
\eeq
and this will finish the proof. Indeed, let us suppose that \eqref{3.2} holds. 
Since $t_1\neq t_2$ also holds, we have that either $x_1\neq y_1$ or $x_2\neq y_2$. Thus, the explicit expressions for $\nu_1$ and $\nu_2$ yield that the spatial component of either $\nu_1$ or $\nu_2$ is nonzero. Therefore, the equalities in \eqref{3.1}  imply that the spatial components of $\nu$, $\nu_1$, and $\nu_2$ are all nonzero, so the explicit expression \eqref{nu} for $\nu$  gives $x_1\neq x_2$.  

Therefore, it remains to prove \eqref{3.2}. We provide the proof of \eqref{3.2} only for the case when $t_1>t_2$, as the case $t_1<t_2$ follows similarly.
Let us assume, for the sake of contradiction, that $x_1=y_1$.  (Then \eqref{nu} and \eqref{3.1} imply  that $x_1=x_2$ holds as well.) Since $t_1>t_2$, the explicit expression \eqref{nu} for $\nu$ and the equalities in \eqref{3.1} imply that both $\nu$ and $\nu_1$ point in the negative $t$-direction. Consequently using the explicit expression for $\nu_1$, we find $t_1<s_1$. Note that
\[
d((x_1,t_1),\Omega_1^{\delta})=|(y_1, s_1) - (x_1, t_1)|= r-ht_1,
\]
and so $(y_1,s_1)$ is regular from the exterior of $\Omega_1^\delta$ and $B_1:=B_{r-ht_1}((x_1,t_1))$ is one exterior ball tangent to 
$\Omega_1^{\delta}$ at $(y_1,s_1)$.

Now we define the test set $\Phi:= \calB_R(y_1)\times (0,t_1+1)\backslash B_1$, where  $R$ is large enough such that $\Omega_1^\delta\cap (0,t_1)\subseteq\Phi$. (Recall that $\calB_R(y_1)\subset\R^d$ denotes the spatial ball of radius $R$ centered at $y_1$.) We have that $\Phi$ is externally tangent to $\Omega_1^\delta$ at $(y_1,s_1)$. 
Let $p(\cdot;\Phi(t))$ satisfy the elliptic equation \eqref{elliptic} in $\Phi(t)$ with $0$ boundary data (as in Definition \ref{Ass}).  Since $\Omega_1^\delta$ is a viscosity subflow to \eqref{main} with $V$ replaced by $(1+\delta)^{-2}V$, we have 
\[
(1+\delta)^{-2}V(x_0,p(y_1;\Phi(s_1)))\geq \calK^{\Phi}_{y_1,s_1}.
\]
 Assumption \ref{assump3}\ref{item:assump1.1 4}  implies that 
$\nabla p(y_1;\Phi(s_1))$ (defined via non-tangential limit) is finite, and so
\[
V(x_0,p(y_1;\Phi(s_1)))<\infty.
\]
But, since $t_1<s_1$ and $x_1=y_1$, we have that $\calK^{\Phi}_{y_1,s_1}=+\infty$, 
which, together with the two previous inequalities, yields the desired contradiction. Therefore, we conclude that $x_1\neq y_1$ holds.

Finally, recalling the notation in Definition \ref{def:regset}, we have, for $i=1,2$, 
\[
\calK^{\Sigma_i}_{x_i,t_i}= -\nu^{\Sigma_i}_{t_i}/\nu^{\Sigma_i}_{x_i}.
\]
The fact that $x_1\neq x_2$, together with \eqref{3.1} and \eqref{nu}, imply that $\nu^{\Sigma_i}_{x_i}\neq 0$ for $i=1,2$. Therefore, \eqref{Ksigma<infty 1} holds, completing the proof.
\end{proof}


As usual, suppose that $\Omega_1$ is a subflow of \eqref{main}. 
Since $\Omega_1^\delta$ is a subflow of \eqref{main}, we know that if a regular set is externally tangent to it at a point, then the inequality in Definition \ref{D.1.2}\ref{d:subf} holds (with $V$ replaced by $(1+\delta)^{-2}V$). However, in the proof of our comparison result (Theorem \ref{L.cp}) we will be in a situation where a regular set  is externally tangent to $\Omega_1^{\delta, r, h}$. Can we deduce that some perturbation of that set is externally tangent to $\Omega_1^\delta$? Part \ref{L.3.3.i} of the following lemma answers this question  in the affirmative, and, moreover, relates the normal velocities of the two sets. In part \ref{L.3.3.ii} we establish an analogous result but for superflows and their regularizations.


We remark that the statement of the following lemma is similar to that of \cite[Lemma 4.7]{card2007}. However, since the present work and \cite{card2007} construct different approximations of sub/superflows, the proof of this lemma  differs from that of \cite[Lemma 4.7]{card2007}. 

Recall that  $\calK^{\Sigma}_{x,t}$ denotes the normal velocity of a regular set $\Sigma\subseteq\R^{d+1}$ at $(x,t)\in\partial \Sigma$.


\begin{lemma}[Tangent sets to $\Omega_i^{r, \delta, h}$ and $\Omega_i^\delta$] \lb{L.3.3}
Let $T>0$ and $\delta, r, h\in [0,1)$, and let two bounded sets $\Omega_1$ and $\Omega_2$  be, respectively, viscosity sub- and superflows to \eqref{main} 
in $\R^d \times (0,T)$. Let $\Omega_i^\delta$ and $\Omega_i^{\delta,r,h}$, for $i=1,2$, be as defined in  \eqref{5.0}-\eqref{5.2}.
\begin{enumerate}[(i)]
    \item \label{L.3.3.i}
Assume that an open set $\Sigma\subseteq\R^{d+1}$ with $C^2$ boundary is externally tangent to $\Omega_{1}^{\delta,r,h}$
at some point $(x_1, t_1) \in \partial \Omega_{1}^{\delta,r,h}$. 
Let $(y_1, s_1) \in \partial\Omega_{1}^\delta$ be such that
\[
|(y_1, s_1) - (x_1, t_1)|= r-ht_1.
\]
Then there exists an open set $\widetilde\Sigma\subseteq\R^{d+1}$ with $C^{1}$ boundary that is externally tangent to
$\Omega_{1}^\delta$ at $(y_1, s_1)$ in the domain  $\R^d\times [s_0,s_1]$ for some $s_0<s_1$ such that
\[
\calK^{\widetilde{\Sigma}}_{y_1,s_1}
\;\ge\;
\calK^{\Sigma}_{x_1,t_1} + h
\qquad\text{and}\qquad
\widetilde{\Sigma}(s_1) = \Sigma(t_1) + (y_1 - x_1).
\]
\item \label{L.3.3.ii}
Assume that an open set $\Sigma\subseteq\R^{d+1}$ with $C^2$ boundary is internally tangent to $\Omega_{2}^{\delta,r,h}$
at some point $(x_2, t_2) \in \partial \Omega_{2}^{\delta,r,h}$. 
Let $(y_2, s_2) \in \partial\Omega_{1}^\delta$ be such that
\[
|(y_2, s_2) - (x_2, t_2)|= r-ht_2.
\]
Then there exists an open set $\widetilde\Sigma\subseteq\R^{d+1}$ with $C^{1}$ boundary that is internally tangent to
$\Omega_{2}^\delta$ at $(y_2, s_2)$ in the domain  $\R^d\times [s_0,s_2]$ for some $s_0<s_2$ such that
\[
\calK^{\widetilde{\Sigma}}_{y_2,s_2}
\;\le\;
\calK^{\Sigma}_{x_2,t_2} - h
\qquad\text{and}\qquad
\widetilde{\Sigma}(s_2) = \Sigma(t_2) + (y_2 - x_2).
\]
\end{enumerate}
\end{lemma}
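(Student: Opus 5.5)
We prove only part (i); part (ii) follows the same way with the inequalities reversed. The idea is to pull $\Sigma$ back onto $\Omega_1^\delta$ by a time-dependent translation that tracks the shrinking radius $r-ht$.

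Write $(\Delta x,\Delta t):=(y_1-x_1,s_1-t_1)$ and $\rho(t):=r-ht$, so that $|(\Delta x,\Delta t)|=\rho(t_1)$. By definition \eqref{5.1}, a point $(x,t)$ with $t<r/h$ lies in $\Omega_1^{\delta,r,h}$ whenever some point of $\Omega_1^\delta$ is within distance $\rho(t)$ of it. Fix the unit vector $e:=(\Delta x,\Delta t)/\rho(t_1)$ and define the moving translation
\[
\tau(s):=(\Delta x,\Delta t)\,\frac{\rho(s)}{\rho(t_1)}=\rho(s)\,e .
\]
Consider the candidate set
\[
\widetilde\Sigma:=\{(z,s)+\tau(s)\,:\,(z,s)\in\Sigma\},
\]
or more precisely its slice-wise description in the new time variable $\sigma=s+\rho(s)e_t$. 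This map is a $C^1$ diffeomorphism of $\R^{d+1}$ for small $h$, so $\widetilde\Sigma$ has $C^1$ boundary.

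\textbf{Tangency.} We claim $\widetilde\Sigma$ is externally tangent to $\Omega_1^\delta$ at $(y_1,s_1)$. The point $(x_1,t_1)$ is sent to $(x_1,t_1)+\rho(t_1)e=(y_1,s_1)$, and $\partial\Sigma$ is sent to $\partial\widetilde\Sigma$, so $(y_1,s_1)\in\partial\widetilde\Sigma$. For containment, take $(w,\sigma)\in\Omega_1^\delta$ with $\sigma$ close to $s_1$. Let $(z,s)$ be its preimage, so $(w,\sigma)=(z,s)+\rho(s)e$. Then $|(z,s)-(w,\sigma)|=\rho(s)$ exactly, which is the boundary case of \eqref{5.1}.

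To get strict membership, I would replace $\rho$ by $\rho-\eta$ for tiny $\eta>0$, or use openness. Then $(z,s)\in\Omega_1^{\delta,r,h}$, and by the external tangency of $\Sigma$ we get $(z,s)\in\Sigma$ when $s\le t_1$, i.e.\ $(w,\sigma)\in\widetilde\Sigma$. The time window $[s_0,s_1]$ is the image of $[t',t_1]$; this needs $\sigma$ to be monotone in $s$, which holds since $d\sigma/ds=1-he_t>0$.

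\textbf{Time-slice identity.} We have $\widetilde\Sigma(s_1)=\Sigma(t_1)+(y_1-x_1)$ up to the fact that the map is not slice-preserving. This is the main obstacle.

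I would handle it by instead defining $\widetilde\Sigma$ through slices: $\widetilde\Sigma(\sigma):=\Sigma(\sigma-\Delta t)+\Delta x+\lambda(\sigma)$. Here $\lambda(\sigma)$ is a spatial outward shift that grows at rate at least $h$ in the normal direction backward in time. Concretely, take
\[
\widetilde\Sigma:=\bigcup_{s}\Big(\big(\Sigma(s-\Delta t)+\Delta x\big)+\calB_{h(s_1-s)}(0)\Big)\times\{s\}
\]
for $s\le s_1$. Containment then uses that the ball $B_{\rho(t)}((x,t))$ shrinks exactly by rate $h$. Near the touching point this is a $C^1$ set whose boundary moves with normal velocity $\calK^\Sigma-h$ backward in time, i.e.\ it is advanced by exactly $h$ forward. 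This gives $\calK^{\widetilde\Sigma}_{y_1,s_1}\ge\calK^\Sigma_{x_1,t_1}+h$, and $\widetilde\Sigma(s_1)=\Sigma(t_1)+(y_1-x_1)$ holds exactly.

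The containment check is the step where care is needed. If $(w,s)\in\Omega_1^\delta$, then every point within $\rho$ of it lies in $\Omega_1^{\delta,r,h}$. In particular $(w-\Delta x,s-\Delta t)$ is at distance $\rho(t_1)$, which is at most $\rho(s-\Delta t)+h(s_1-s)$ after using the slack produced by the time shift. That is exactly what the enlarged ball term $\calB_{h(s_1-s)}$ absorbs. The $C^1$ regularity of the boundary near $(y_1,s_1)$ follows from the $C^2$ regularity of $\Sigma$, via the distance to a $C^2$ set being $C^{1}$ at small distances.
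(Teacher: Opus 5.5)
Your final construction has the sign backwards. Put $\Sigma'(s):=\Sigma(s-\Delta t)+\Delta x$ and $k:=\calK^{\Sigma}_{x_1,t_1}$. Let $n$ be the outward spatial unit normal. To first order near $(y_1,s_1)$, $\Sigma'(s)\approx\{x:(x-y_1)\cdot n<k(s-s_1)\}$. Your set $\widetilde\Sigma(s)=\Sigma'(s)+\calB_{h(s_1-s)}(0)$ is then, to first order, $\{x:(x-y_1)\cdot n<(k-h)(s-s_1)\}$ for $s\le s_1$. A set that is larger at earlier times and contracts back onto $\Sigma'(s_1)$ is retreating relative to $\Sigma'$. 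So $\calK^{\widetilde\Sigma}_{y_1,s_1}=\calK^{\Sigma}_{x_1,t_1}-h$, the opposite of the required $\ge\calK^{\Sigma}_{x_1,t_1}+h$; your phrase ``velocity $\calK^\Sigma-h$ backward in time'' hides this. The containment $\Omega_1^\delta\subseteq\widetilde\Sigma$ does hold, but only because $\Sigma'$ already contains $\Omega_1^\delta$. Your own computation reads $\rho(t_1)=\rho(s-\Delta t)-h(s_1-s)$, so the slack is in your favor and the enlargement has nothing to absorb. With your $\widetilde\Sigma$, the subflow definition only gives $(1+\delta)^{-2}V\ge\calK^\Sigma-h$. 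That is useless in the proof of Theorem~\ref{L.cp}, where the strict gain $+h$ on the subflow side (and $-h$ on the superflow side) produces the contradiction. In fact your dilation is the right construction for part (ii), not part (i).

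The missing idea is to spend the slack by \emph{shrinking} $\Sigma'$. Take $(w,s)\in\Omega_1^\delta$ with $s<s_1$ close to $s_1$, and $|z|<h(s_1-s)$. Then $(w+z-\Delta x,s-\Delta t)$ is at distance $<\rho(t_1)+h(s_1-s)=\rho(s-\Delta t)$ from $(w,s)$, so it lies in $\Omega_1^{\delta,r,h}\subseteq\Sigma$. Hence $\Omega_1^\delta(s)$ stays at distance at least $h(s_1-s)$ from $\Sigma'(s)^c$. The erosion of $\Sigma'(s)$ by $h(s_1-s)$ is therefore an admissible test set; it equals $\Sigma'(s_1)$ at $s=s_1$ and has velocity $k+h$ at $(y_1,s_1)$.

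The paper uses the space-time version: $\widetilde\Sigma:=N^c$, with $N$ the union of closed balls of radius $h(s_1-s)$ centered at $(y,s)\in(\Sigma')^c$. Containment is the triangle inequality $|(y,s),(x,t)|\le h(s_1-s')+r-ht_1=r-ht$. The $C^1$ boundary comes from the $C^1$ projection onto the $C^2$ set $\Sigma'$. The gain comes from blowing up at $(y_1,s_1)$ to half-spaces, giving $l-k=h\sqrt{1+l^2}\ge h$, with $k=+\infty$ handled separately.

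Finally, you abandoned your first idea for a wrong reason. The new time $s+\rho(s)e_t$ depends only on $s$ and is increasing, so that affine map does send time slices to time slices, and $\widetilde\Sigma(s_1)=\Sigma(t_1)+(y_1-x_1)$ holds exactly. Its real weaknesses are different. It uses up the whole radius, so containment holds only up to closure. Its velocity $(k-h\,e_x\cdot n)/(1-he_t)$ equals $k+h/|e_x|$ when the normal of $\Sigma$ at $(x_1,t_1)$ is the $\nu_1$ of Lemma~\ref{L.3.1}. It can fall below $k+h$ when $s_1>t_1$ and that normal is tilted toward $+t$, which one-sided-in-time tangency allows.
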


\begin{proof}
We only provide the proof of item \ref{L.3.3.i}; item \ref{L.3.1.ii} follows similarly. 

Let us first define a shift version of $\Sigma$ up to time $t_1$ by
\[
\Sigma':=\{(x,t)-(x_1,t_1)+(y_1,s_1)\,:\, (x,t)\in\Sigma,\,t\leq t_1\},
\]
and a neighborhood of $(\Sigma')^c$ as follows
\[
N:=\left\{(y+z,s+\tau)\,:\, (y,s)\in (\Sigma')^c,\, s+\tau\leq s_1,\, |(z,\tau)|\leq h(s_1-s)\right\}.
\]
We can rewrite $N$ as
\[
N=\{(y,s)\,:\, s'\leq s_1,\,|(y,s),P_{\Sigma'}(y,s)|\leq h(s_1-s')\}
\]
where $(y',s'):=P_{\Sigma'}(y,s)$ is the unique projection map as long as $s_1-s$ is small enough, due to the assumption that $\partial\Sigma'$ is $C^2$. Since $\partial \Sigma'$ is $C^2$, it follows from \cite[Theorem C]{leobacher2021existence} that the projection map $P_{\Sigma'}$ is $C^1$, and therefore, $N$ has at least $C^1$ boundary.

Now, we claim
\beq
\label{eq:NSigmac}
N\subseteq (\Omega_1^\delta)^c.
\eeq
Indeed, for any $(y,s)\in N$ and for
\[
(x,t):=(y',s')-(y_1,s_1)+(x_1,t_1)\in\Sigma^c\quad\text{and}\quad (y',s'):=P_{\Sigma'}(y,s),
\]
we have that
\[
|(y,s),(x,t)|\leq |(y,s),P_{\Sigma'}(y,s)|+|(y_1,s_1),(x_1,t_1)|\leq r-ht_1+h(s_1-s')= r-ht.
\]
Note that $t=s'-s_1+t_1\leq t_1$. Since $(x,t)\in\Sigma^c$, and since $(\Sigma(r))^c\subseteq (\Omega_{1}^{\delta,r,h}(r))^c$ for all $r\leq t_1$, we find $(x,t)\in (\Omega_{1}^{\delta,r,h})^c$. Together with
the definition of $\Omega_{1}^{\delta,r,h}$, this shows that $(y,s)\in(\Omega_1^\delta)^c$. Therefore, \eqref{eq:NSigmac} holds. 
Thus, defining $\widetilde\Sigma:=N^c$, we get that
$\Omega_1^\delta\subseteq \widetilde\Sigma$. Moreover,  the definitions imply  $\widetilde{\Sigma}(s_1) = \Sigma(t_1) + (y_1 - x_1)=\Sigma'(s_1)$.


Finally, we show $\calK^{\widetilde{\Sigma}}_{y_1,s_1}
\ge
\calK^{\Sigma}_{x_1,t_1} + h$. For the convenience of notations, let us assume that $y_1=0$ and $s_1=0$. Then,  for each $j\geq 1$, we define 
\[
\wti \Sigma_j:= \{(jx,jt)\,:\, (x,t)\in\wti\Sigma\}\quad\text{and}\quad  \Sigma_j':= \{(jx,jt)\,:\, (x,t)\in \Sigma'\}.
\]
Since $\calK^{\Sigma}_{x_1,t_1}=\calK^{\Sigma'}_{y_1,s_1}$and the blow-up procedure preserves the normal direction, we have $\calK^{\widetilde{\Sigma}}_{y_1,s_1}=\calK^{\widetilde{\Sigma}_j}_{0,0}$ and $
\calK^{\Sigma}_{x_1,t_1}=
\calK^{\Sigma'_j}_{0,0}$. Moreover, the definition of $\Sigma'$ yields 
\beq\lb{blowup}
\wti\Sigma_j=\{(x,t)\,:\, t\leq 0,\,d((x,t),(\Sigma'_j)^c)> -ht\}.
\eeq
Because both $\Sigma'$ and $\widetilde{\Sigma}$ have $C^1$ 
boundary, it follows that there exist half spaces $\Sigma'_\infty$ and $\wti\Sigma_\infty $ such that
\[
\Sigma'_j\to \Sigma'_\infty\quad\text{and}\quad\wti\Sigma_j\to \wti\Sigma_\infty 
\]
locally uniformly in the Hausdorff distance. Indeed, the boundaries of $\Sigma'_\infty$ and $\wti\Sigma_\infty $ are the tangent planes of $\Sigma'_j$ and $\wti\Sigma_j $ at $(0,0)$, respectively. Moreover, \eqref{blowup} holds with $\infty$ instead of~$j$.

\begin{figure}[t]
\label{fig1}
    \centering
    \includegraphics[scale=0.17]{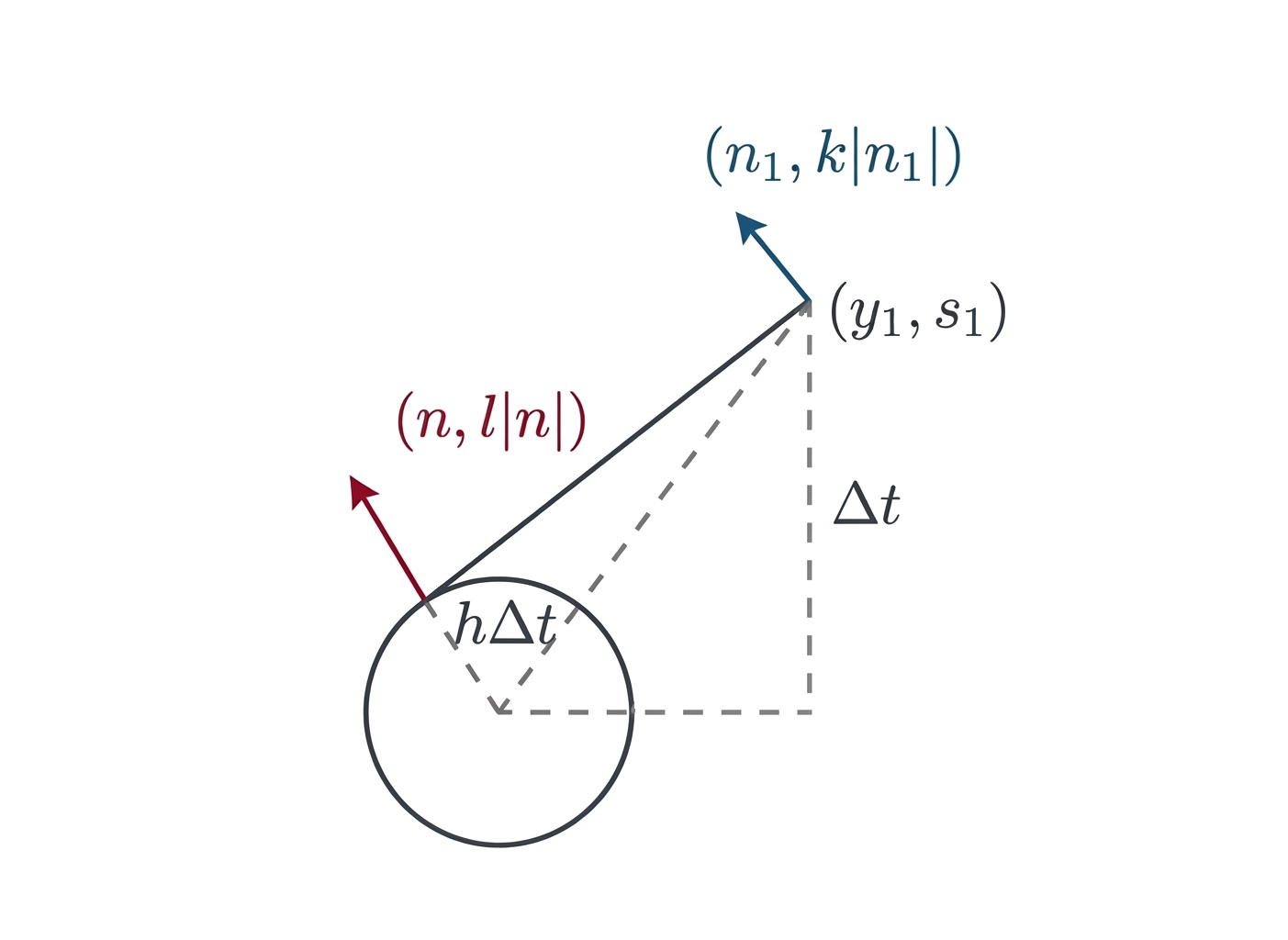}
\end{figure}

Assuming $k:=\calK^{\Sigma'}_{y_1,s_1}$ is finite, we can write
\[
\Sigma'_\infty=\{(x,t)\,:\, (x,t)\cdot(n_1,k)\ge 0\}
\]
where $(n_1,k)\in\mathbb{S}^{d-1}\times \bbR$. Similarly, assuming $l:=\calK^{\widetilde{\Sigma}}_{y_1,s_1}$ is finite, we can denote
\[
\wti\Sigma_\infty=\{(x,t)\,:\, (x,t)\cdot(n_2,l)\ge 0\}
\]
where $(n_2,l)\in\mathbb{S}^{d-1}\times \bbR$. 
Recall that $0\in \widetilde{\Sigma}(0) =\Sigma'(0)$. So, the spatial part of the normal directions at the origin are the same, i.e., $n_1=n_2=:n$.

Next, \eqref{blowup} with $\infty$ in place of $j$ yields that 
\beq\lb{3.6}
\wti\Sigma_\infty=\{(x,t)\,:\, t\leq 0,\,|(x,t),(y,s)|> -ht,\,(y,s)\in (\Sigma'_\infty)^{c}\},
\eeq
and so for any $x_1$ such that $x_1\cdot n+k\times (-1)=0$, the surface $\partial \widetilde{\Sigma}_\infty =\{(y,s) \, :\,  (y,s)\cdot (n,l)=0\}$
is tangent to the space-time ball $B_{h}((x_1,-1))$. In particular, the point $p:=(x_1, -1) +h\frac{(n,l)}{|(n,l)|}$ lies on $\partial \widetilde{\Sigma}_\infty$; equivalently, $p\cdot (n, l)=0$. Thus, we get
\[
x_1\cdot n=k,\quad x_1\cdot n-l=-h \sqrt{1+l^2}, 
\]
which yields
\[
k-l =-h \sqrt{1+l^2}\leq -h.
\]
Recall that the boundary velocities are given by $k=\calK^{{\Sigma'}}_{y_1,s_1}$ and $l=\calK^{\widetilde{\Sigma}}_{y_1,s_1}$. 
Thus, we obtain
\[
\calK^{\widetilde{\Sigma}}_{y_1,s_1}\geq \calK^{{\Sigma'}}_{y_1,s_1}+h= \calK^{{\Sigma}}_{x_1,t_1}+h.
\]
Finally, it follows from \eqref{3.6} that $l>-\infty$.

Now, let us assume that $k=+\infty$. Then
\[
\Sigma'_\infty=\{(x,t)\,:\, (x,t)\cdot(0,1)\ge 0\}.
\]
Since $(0,0)$ is on the boundaries of $\Sigma'_\infty$ and $\wti\Sigma_\infty$ and $h<1$,  \eqref{3.6} implies that 
\[
\wti\Sigma_\infty=\{(x,t)\,:\, (x,t)\cdot(0,1)\ge 0\}.
\]
This then implies that $l=+\infty$, which yields the conclusion in this case.



\end{proof}

In our proof of comparison (Theorem \ref{L.cp}), we will use a result on interposition of sets (Theorem \ref{P.3.9}) to produce  $C^{1,1}$ sets $\Sigma_1$ and $\Sigma_2$ such that $\Sigma_1$ is externally tangent to $\Omega_1^{\delta, r, h}$ and  $\Sigma_2$ is internally tangent to $\Omega_2^{\delta, r, h}$. At that point, we would like to apply Lemma \ref{L.3.3} to produce sets $\widetilde\Sigma_1$ and $\widetilde\Sigma_2$ that are internally externally tangent to $\Omega_1^\delta$ and $\Omega_2^\delta$ and then apply the fact that  $\Omega_1^\delta$ is a viscosity subflow and $\Omega_2^\delta$ is a viscosity superflow. The only problem with this plan is that Lemma \ref{L.3.3} requires $\Sigma_1$ and $\Sigma_2$ to have $C^2$, rather than merely $C^{1,1}$, boundaries. In the following lemma, we use an approximation argument to overcome this difficulty. 

Recall that, for an open set $U\subset \R^d$ with $C^{1,1} $ boundary, we use $p(\cdot;U)$ to denote the solution to \eqref{elliptic} in $U$ with $0$ boundary condition (see Definition \ref{Ass}). 

\begin{lemma}[Approximation]\lb{L.3.6}   
Let $T>0$ and $\delta, r, h\in [0,1)$, and let two bounded sets $\Omega_1$ and $\Omega_2$  be, respectively, viscosity sub- and superflows to \eqref{main} 
in $\R^d \times (0,T)$. Let $\Omega_i^\delta$ and $\Omega_i^{\delta,r,h}$, for $i=1,2$, be as defined in  \eqref{5.0}-\eqref{5.2}.
\begin{enumerate}[(i)]
    \item \lb{L.3.6.i}
Assume that an open set $\Sigma_1\subseteq\R^{d+1}$ with $C^{1,1}$ boundary is externally tangent to $\Omega_{1}^{\delta,r,h}$
at some point $(x_1, t_1) \in \partial \Omega_{1}^{\delta,r,h}$ and 
$
|\calK^{\Sigma_1}_{x_1,t_1}|<\infty$. 
Let $(y_1, s_1) \in \partial\Omega_{1}^\delta$ be such that
\[
|(y_1, s_1) - (x_1, t_1)|= r-ht_1.
\]
Then we have 
\[
(1+\delta)^{-2}V(y_1,q_1)\geq \calK^{\Sigma_1}_{x_1,t_1} +  h,
\]
where $q_1:=\nabla p(y_1;\Sigma_1(t_1)+y_1-x_1)$. 
\item \lb{L.3.6.ii} Assume that an open set $\Sigma_2\subseteq\R^{d+1}$ with $C^{1,1}$ boundary is internally tangent to $\Omega_{2}^{\delta,r,h}$
at some point $(x_2, t_2) \in \partial \Omega_{2}^{\delta,r,h}$ and $
|\calK^{\Sigma_2}_{x_2,t_2}|<\infty$. 
Let $(y_1, s_1) \in \partial\Omega_{2}^\delta$ be such that
\[
|(y_1, s_1) - (x_1, t_1)|= r-ht_1.
\]
Then we have 
\[
(1+\delta)^2V(y_2,q_2)\leq  \calK^{\Sigma_2}_{x_2,t_2} - h
\]
where $q_2:=\nabla p(y_2;\Sigma_2(t_2)+y_2-x_2)$.
\end{enumerate}
\end{lemma}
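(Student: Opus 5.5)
We prove only item (i); item (ii) follows by the symmetric argument, using Lemma \ref{L.3.3}(ii) and the superflow property of $\Omega_2^\delta$.

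The idea is to approximate $\Sigma_1$ from outside by $C^2$ sets that remain externally tangent to $\Omega_1^{\delta,r,h}$ at $(x_1,t_1)$. We then apply Lemma \ref{L.3.3}(i) to each approximant and pass to the limit in the resulting inequality.

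\textbf{Approximation.} Since $\partial\Sigma_1$ is $C^{1,1}$, we choose open sets $\Sigma^\eps\supseteq \Sigma_1$ with $C^2$ (indeed smooth) boundary such that the following hold.
\begin{enumerate}[(a)]
\item $(x_1,t_1)\in\partial\Sigma^\eps$, and the outer normals agree there, $\nu^{\Sigma^\eps}_{x_1,t_1}=\nu^{\Sigma_1}_{x_1,t_1}$.
\item $\Sigma^\eps\to\Sigma_1$ in the $C^{1,\alpha}$ sense, with uniform $C^{1,1}$ bounds.
\end{enumerate}
To build $\Sigma^\eps$, write $\partial\Sigma_1$ locally as a graph of a $C^{1,1}$ function $g$ and mollify it to $g_\eps$. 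We then add a correction $-C\eps+\eps^{-1}|\cdot|^2$-type localized term, tilted so that the value and gradient at the base point match those of $g$. Because $g$ has a uniform $C^{1,1}$ bound, $g_\eps\ge g-C\eps^2$, so a quadratic bump of size $C|z|^2$ keeps the perturbed graph outside $\Sigma_1$ while touching at the base point. Alternatively, we take $\Sigma^\eps=\Sigma_1\cup(\text{small ball-type tangent set})$ smoothed.

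Since $\Sigma^\eps\supseteq\Sigma_1$, each $\Sigma^\eps$ is still externally tangent to $\Omega_1^{\delta,r,h}$ at $(x_1,t_1)$. Moreover, $\calK^{\Sigma^\eps}_{x_1,t_1}=\calK^{\Sigma_1}_{x_1,t_1}$, which is finite by hypothesis.

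\textbf{Applying Lemma \ref{L.3.3}(i).} For each $\eps$, Lemma \ref{L.3.3}(i) gives a $C^1$ set $\widetilde\Sigma^\eps$ that is externally tangent to $\Omega_1^\delta$ at $(y_1,s_1)$ and satisfies
\[
\calK^{\widetilde\Sigma^\eps}_{y_1,s_1}\ge \calK^{\Sigma_1}_{x_1,t_1}+h,\qquad \widetilde\Sigma^\eps(s_1)=\Sigma^\eps(t_1)+y_1-x_1 .
\]
The slice $\Sigma^\eps(t_1)$ is $C^{1,1}$ (indeed $C^2$) in space, because $\nu_x\neq 0$ there. By Remark \ref{rem:after visc def}, a $C^1$ space-time test set whose time slice is $C^{1,1}$ is admissible in Definition \ref{D.1.2}. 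We also need $\nu_x^{\widetilde\Sigma^\eps}\neq0$ at $(y_1,s_1)$; this follows because the spatial normal of $\widetilde\Sigma^\eps(s_1)$ equals that of $\Sigma^\eps(t_1)$, and $\calK^{\widetilde\Sigma^\eps}_{y_1,s_1}$ is finite by the blow-up computation in the proof of Lemma \ref{L.3.3}. Since $\Omega_1^\delta$ is a subflow with velocity $(1+\delta)^{-2}V$,
\[
(1+\delta)^{-2}V\bigl(y_1,\nabla p(y_1;\Sigma^\eps(t_1)+y_1-x_1)\bigr)\ge \calK^{\Sigma_1}_{x_1,t_1}+h .
\]

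\textbf{Passing to the limit (main obstacle).} It remains to show
\[
\nabla p(y_1;\Sigma^\eps(t_1)+y_1-x_1)\to \nabla p(y_1;\Sigma_1(t_1)+y_1-x_1)=q_1 ,
\]
after which continuity of $V$ concludes the proof. The domains are nested, $\Sigma^\eps(t_1)\supseteq\Sigma_1(t_1)$, converge in $C^{1,\alpha}$ with uniform $C^{1,1}$ bounds, and share the boundary point $x_1$ and its normal there. We would invoke uniform boundary $C^{1,\alpha}$ estimates for solutions of \eqref{elliptic} in domains with uniformly $C^{1,1}$ boundaries. Together with the uniqueness assumption in Assumption \ref{assump2}(i), stability of viscosity solutions, and a flattening of the boundary near $x_1$, these estimates give convergence of the normal derivative at the common point.

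If such estimates are not available in this generality, there is a fallback that needs only monotonicity. By the comparison principle, $p(\cdot;\Sigma^\eps(t_1))\ge p(\cdot;\Sigma_1(t_1))$, so
\[
|\nabla p(x_1;\Sigma^\eps(t_1))|\ge|q_1| ,
\]
with both gradients pointing in the same normal direction. We then use a lower barrier: the approximants can be chosen so that $\Sigma^\eps(t_1)\subseteq(1+C\eps)$-dilates of $\Sigma_1(t_1)$ about an interior point, giving a matching upper bound via scaling. Combined with \eqref{cond5'}, this yields the inequality in the limit.
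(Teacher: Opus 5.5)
Your proof is correct at the paper's level of rigor. Its skeleton is the paper's: regularize $\Sigma_1$ to a $C^2$ set, apply Lemma~\ref{L.3.3}(i), use the subflow property of $\Omega_1^\delta$, and pass to the limit. The genuine difference is in how you approximate.

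The paper cites \cite[Theorem 3.2]{card2007} to get $C^2$ sets $\Sigma_{1,n}\to\Sigma_1$ in the $C^{1,b}$ sense. These are externally tangent to $\Omega_1^{\delta,r,h}$ at \emph{moving} points $(x_{1,n},t_{1,n})\to(x_1,t_1)$. That result is off the shelf, but it forces a re-choice of $(y_{1,n},s_{1,n})$. It also requires limits in $V(y_{1,n},\widetilde q_n)$ and in $\calK^{\Sigma_{1,n}}_{x_{1,n},t_{1,n}}$, which tacitly need $(y_{1,n},s_{1,n})\to(y_1,s_1)$; this is not automatic if the nearest point is not unique.

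Your outer approximation keeps the contact point and normal \emph{fixed}. This buys three things: $\calK^{\Sigma^\eps}_{x_1,t_1}=\calK^{\Sigma_1}_{x_1,t_1}$ holds exactly, you can use the prescribed $(y_1,s_1)$ in Lemma~\ref{L.3.3}(i), and the limit reduces to boundary gradients on nested domains sharing $x_1$. The price is that you must build $\Sigma^\eps$ yourself.

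\textbf{Two caveats.}

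\emph{The construction.} Your sketch is garbled as written: a term like $\eps^{-1}|z|^2$, or a fixed bump $C|z|^2$, does not tend to $0$ in $C^1$. A clean version goes as follows. Let $d$ be the signed distance to $\partial\Sigma_1$ (negative inside), $d_\eps$ its mollification, $e:=d_\eps-d$ and $z_1:=(x_1,t_1)$. Set
\[
\Sigma^\eps:=\bigl\{z:\ d_\eps(z)-e(z_1)-\nabla e(z_1)\cdot(z-z_1)-A\eps\bigl(\sqrt{|z-z_1|^2+\eps^2}-\eps\bigr)<0\bigr\}.
\]
For $A$ large, the last term dominates the Taylor remainder of $e$ at $z_1$, which is of size $\min\{L|z-z_1|^2,\,C\eps^2+C\eps|z-z_1|\}$ near $\partial\Sigma_1$. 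Consequently:
\begin{itemize}
\item $\Sigma_1\subseteq\Sigma^\eps$;
\item the two boundaries meet at $z_1$ with equal normals;
\item the convergence is $O(\eps)$ in $C^1$, with uniform $C^{1,1}$ bounds.
\end{itemize}

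\emph{The fallback for the limit.} This fails. Dilating $\Sigma_1(t_1)$ moves $x_1$ into the interior, so it gives no upper bound on the boundary gradient at $x_1$. In any case, $F$ has no scaling structure. The limit must come from uniform boundary $C^{1,\alpha}$ estimates. That is exactly what the paper uses tacitly when it asserts $\widetilde q_n\to q_1$ from $C^{1,b}$ convergence.
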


\begin{proof}
We shall only prove \ref{L.3.6.i}, as \ref{L.3.6.ii} follows similarly.

We would like to apply Lemma \ref{L.3.3}; however, since $\Sigma_1$ is only known to be $C^{1,1}$ instead of $C^2$, we need to approximate the set $\Sigma_1$. 
It follows from \cite[Theorem 3.2]{card2007} that there exist open sets $\Sigma_{1,n}$ converging to $\Sigma_1$ in the $C^{1,b}$ sense (see  \cite[Subsection 2.1]{card2007} for the definition), and $(x_{1,n},t_{1,n})\in\partial \Omega_1^{\delta, r, h}$ converging to $(x_1,t_1)$, such that $\Sigma_{1,n}$ is externally tangent to $\Omega_1^{\delta, r, h}$   at $(x_{1,n},t_{1,n})$ and, moreover, each $\Sigma_{1,n}$ is $C^2$ in a neighborhood of $(x_{1,n},t_{1,n})$. After modification outside the neighborhood (by enlarging the sets), we can just assume $\Sigma_{1,n}$ to be $C^2$. 

We let $(y_{1,n}, s_{1,n})$ be such that $|(y_{1,n}, s_{1,n}) - (x_{1,n}, t_{1,n})|= r-ht_{1,n}$.
By Lemma \ref{L.3.3}, there is an open set $\widetilde\Sigma_n\subseteq\R^{d+1}$ with $C^{1}$ boundary such that $\widetilde\Sigma_n$ is externally tangent to
$\Omega_{1}^\delta$ at $(y_{1,n}, s_{1,n})$ in the domain of $\R^d\times [s_{0,n},s_{1,n}]$ for some $s_{0,n}<s_{1,n}$, and with
\[
\calK^{\widetilde{\Sigma}_n}_{y_{1,n},s_{1,n}}
\;\ge\;
\calK^{\Sigma_{1,n}}_{x_{1,n},t_{1,n}} + h
\qquad\text{and}\qquad
\widetilde{\Sigma}_n(s_{1,n}) = \Sigma_{1,n}(t_{1,n}) + (y_{1,n} - x_{1,n}).
\]
Since $\Omega_1^\delta$  is a viscosity subflow to \eqref{main} with $V$ replaced by $(1+\delta)^{-2}V$, Definition \ref{D.1.2} and Remark \ref{rem:after visc def} yield
\beq\lb{V11}
(1+\delta)^{-2}V(y_{1,n},\widetilde q_n)\geq \calK^{\widetilde\Sigma_n}_{y_{1,n},s_{1,n}}\geq \calK^{\Sigma_{1,n}}_{x_{1,n},t_{1,n}} +  h
\eeq
where 
$\widetilde q_n:=\nabla p(y_{1,n};\widetilde\Sigma_n(s_{1,n})).
$
Since $\Sigma_{1,n}$ converges to $\Sigma_1$ in the $C^{1,b}$ sense and $\widetilde{\Sigma}_n(s_{1,n}) = \Sigma_{1,n}(t_{1,n}) + (y_{1,n} - x_{1,n})$, we know that
\[
\widetilde q_n\to \widetilde q_1=\nabla p(y_{1};\Sigma_1(t_{1})+y_1-x_1).
\]
Therefore, after passing $n\to \infty$, \eqref{V11} yields the conclusion.
\end{proof}

\subsection{Proof of comparison}
\label{ss:comp and wp}
\begin{proof}[Proof of Theorem \ref{L.cp}]

For some $h\in (0,1)$ and for  $\eps\in (0,1)$ (which will be selected independently of $h$), set
\[
r:=\eps^{2}h,\quad \delta:=\eps h.
\]
Let us use $C_V\geq 1$ to denote the constant from Assumption \ref{assump2}\ref{item:assump1.1 2}. We take $\ep$ to be small enough to ensure 
\beq
\label{delta r h}
 3r\leq \frac{\delta}{C_V} \quad \text{ and } \quad \delta\leq \frac{h}{4C_V}. 
\eeq



Let $\Omega_1^\delta$ and $\Omega_2^\delta$ be as given by \eqref{5.0} and let $\Omega_1^{\delta, r,h}$ and $\Omega_2^{\delta, r,h}$ be as in \eqref{5.1} and \eqref{5.2}. 
The goal is to  show that  $\overline{\Omega_1}\subseteq \text{Int }\Omega_2$. 
Since the choice of $\eps$ is independent of $h$ and the shape of the initial set, it suffices to show  
$\overline{\Omega_1^{\delta,r,h}}\cap (\R^d\times [0,\eps^2])\subseteq \text{Int }\Omega_2^{\delta,r,h}$, which implies 
$\overline{\Omega_1^\delta}\cap (\R^d\times [0,\eps^2])\subseteq \text{Int }\Omega_2^\delta$. The proof is finished after iteration.

By the definitions of viscosity sub- and superflow, $\Omega_1$ and $\Omega_2$ are upper- and lower- semicontinuous. Moreover, by assumption, we have $\overline{\Omega_1(0)}\subset\Omega_2(0)$. Therefore, if $h>0$ is sufficiently small (depending on the initial data), then 
\beq
\label{BrOmega1}
B_r\left({\Omega_1^{\delta, r,h}}\right)\bigcap (\R^d\times[0,t])\subseteq \Omega_2^{\delta, r,h}
\eeq
 for all sufficiently small $t>0$. We fix one such $h$.


Now, we assume for contradiction that the inclusion \eqref{BrOmega1} 
does not hold for all $t\in [0,\eps^2]$. 
Since it holds for small $t>0$, this implies that  the two sets must be ``as close as $r/2$" at some  $T'\in (0,\eps^2)$: that is, we must have that
\beq\label{r/2'}
B_{r/2}\left(\Omega_1^{\delta, r,h}\right)\bigcap (\R^d\times [0,T'])\subseteq \Omega_2^{\delta,r,h},
\eeq
 and that there exist $(x_1,t_1)\in\partial \Omega_1^{\delta, r,h}$ and $(x_2,t_2)\in\partial\Omega_2^{\delta,r,h}$ such that
\beq\lb{r/2}
|(x_1,t_1)- (x_2,t_2)|=d\left(\Omega_1^{\delta, r,h}\bigcap (\R^d\times [0,t_1]), (\Omega_2^{\delta,r,h})^c\bigcap (\R^d\times [0,t_2])\right)=r/2,
\eeq
with $T'=t_1$ or $T'=t_2$.

It follows from Lemma \ref{L.3.1} that $(x_1,t_1)$ is regular from the interior of $\Omega_1^{\delta,r,h}$ and $(x_2,t_2)$ is regular from the exterior of $\Omega_2^{\delta,r,h}$. Furthermore, \eqref{r/2} implies that $(x_1,t_1)$ is also regular from the exterior of $\Omega_1^{\delta,r,h}$ and $(x_2,t_2)$ is also regular from the interior of $\Omega_2^{\delta,r,h}$. Thus, the unique unit outer normal direction of $\Omega_1^{\delta,r,h}$ at $(x_1,t_1)$ is the same as the unique unit outer normal direction of $\Omega_2^{\delta,r,h}$ at $(x_2,t_2)$, which is given by $\nu/|\nu|$, where
\beq
\label{nu'}
\nu:=(x_2-x_1,t_2-t_1)\neq 0.
\eeq

We have now verified the hypotheses of Lemma \ref{L.3.4}; the lemma thus implies  $x_1\neq x_2$ and
\beq
\label{Ksigma<infty 2}
|\calK^{\Sigma_1}_{x_1,t_1}|,\quad |\calK^{\Sigma_2}_{x_2,t_2}|<\infty.
\eeq
We can now apply Theorem \ref{P.3.9}: indeed, \eqref{r/2'}, \eqref{r/2}, and the fact that  $x_1\neq x_2$ imply that the hypotheses of Theorem \ref{P.3.9} hold. Thus we find that
there exists a compact set $\Sigma_1\subseteq\R^{d+1} $ with $C^{1,1}$ boundary that is externally tangent to $\Omega_1^{\delta, r,h}$ at $({x}_1,{t}_1)$ and such that
\beq
\label{defSigma2}
\Sigma_2 := \Sigma_1 + ({x}_2,{t}_2) - ({x}_1,{t}_1)
\eeq
is internally tangent to $\Omega_2^{\delta,r,h}$ at $({x}_2,{t}_2)$. 
The definition of $\Sigma_2$ implies
\beq
\label{K Sigma 1 and 2}
\calK^{\Sigma_1}_{x_1,t_1}=\calK^{\Sigma_2}_{x_2,t_2}.
\eeq
This equality will be crucial in obtaining the desired contradiction and completing the proof of this theorem. 

We are now exactly in the situation of Lemma \ref{L.3.6}. Applying the lemma, we  find  
\begin{align}
\label{lem3.5 consequence}
(1+\delta)^{-2}V(y_1,q_1)\geq \calK^{\Sigma_1}_{x_1,t_1} +  h,
\qquad
(1+\delta)^2V(y_2,q_2)\leq  \calK^{\Sigma_2}_{x_2,t_2} - h,    
\end{align}
where  $(y_i, s_i)\in \Omega_i^\delta$, for $i=1,2$, are  such that 
\beq\lb{rdis}
|(y_1, s_1) - (x_1, t_1)|= r-ht_1\quad\text{and}\quad |(y_2, s_2) - (x_2, t_2)|= r-ht_2,
\eeq
and 
\[
q_1:=\nabla p(y_1;\Sigma_1(t_1)+y_1-x_1),\quad q_2:=\nabla p(y_2;\Sigma_2(t_2)+y_2-x_2).
\]

Recall that $\Sigma_2$ is a translate of $\Sigma_1$; in particular, the definition  \eqref{defSigma2} of $\Sigma_2$ implies, 
\beq\lb{3.8}
\Sigma_2(t_2)+(y_2-x_2) = \Sigma_1(t_1) +(y_1-x_1)+ (y_2 - y_1).
\eeq
So, denoting $\delta_y:=y_2-y_1$, we have that $p(x-\delta_y; \Sigma_1(t_1)+(y_1-x_1))$ solves 
\[
F(x-\delta_y,p, \nabla p,D^2p)=0\quad \text{ in }\Sigma_2(t_2)+(y_2-x_2)
\]
with $0$ boundary condition. 
By \eqref{r/2} and \eqref{rdis}, 
\begin{equation}
    \label{3r}
|\delta_y|=|y_2-y_1|\leq |y_1-x_1|+|x_1-x_2|+|x_2-y_2|\leq 3r.
\end{equation}
Recall that $\ep$ was taken sufficiently small to ensure \eqref{delta r h} holds; therefore,
\begin{equation}
\label{size delta y}
    |\delta_y|=|y_2-y_1|\leq \delta/C_V.
\end{equation}
Since $y_2$ is on the boundary of $\Sigma_2(t_2)+y_2-x_2$, it follows from  the previous line,  \eqref{3.8}, and Assumption \ref{assump3}\ref{item:assump1.1 3}  that for all $\eps>0$ sufficiently small,
\[
|\nabla_x p(y_2-\delta_y;\Sigma_1(t_1) + (y_1 - x_1))|\leq (1+\delta)|\nabla_x p(y_2;\Sigma_2(t_2)+(y_2-x_2))|.
\]
This implies that
\beq\lb{V22}
|q_1|\leq (1+\delta)|q_2|.
\eeq
Again in view of \eqref{3.8},
\beq\lb{V222}
q_1\text{ and }q_2\text{  point in the same direction}.
\eeq


Our assumption \eqref{cond5'} on $V$, together with \eqref{size delta y}, yields, 
\[
(1+\delta)^{-2}V(y_1,q_1)\leq V(y_2,(1+\delta)^{-1} q_1)+C_V|y_1-y_2|^2/\delta +C_V\delta,
\]
where $C_V\geq 1$ is the constant from Assumption \ref{assump2}\ref{item:assump1.1 2}.
Next,  \eqref{V22} and \eqref{V222}, together with our assumption  \eqref{cond5} on $V$, imply that the first term on the right-hand side of the previous line is bounded from above by $V(y_2, q_2)$.  And, upon using \eqref{size delta y} and \eqref{delta r h}, we find,
\[
C_V|y_1-y_2|^2/\delta +C_V\delta\leq 2C_V\delta\leq h/2.
\]
Thus we obtain,
\[
(1+\delta)^{-2}V(y_1,q_1)\leq V(y_2, q_2)+h/2.
\]

Next, we use \eqref{delta r h},  \eqref{cond5}, and then \eqref{cond5'}    to find,
\[
(1+\delta)^2 V(y_2,q_2)+\frac{h}{2}\geq (1+\delta)^2 V(y_2,q_2)+C_V\delta \geq V(y_2,(1+\delta)q_2)\geq V(y_2,q_2).
\]
Putting these  inequalities together we find
\[
(1+\delta)^{-2}V(y_1,q_1) \leq (1+\delta)^2V(y_2, q_2)+h. 
\]
Using \eqref{lem3.5 consequence} to bound the left-hand side of the previous line from below and the right-hand side of the previous line from above, we find,
\begin{align*}
\calK^{\Sigma_1}_{x_1,t_1} +  h\leq (1+\delta)^{-2}V(y_1,q_1)\leq (1+\delta)^2 V(y_2,q_2)+h\leq  \calK^{\Sigma_2}_{x_2,t_2}.
\end{align*}
Recalling the equality in \eqref{K Sigma 1 and 2}, we see that we have obtained the desired contradiction. Thus, we proved that $\Omega_1^{\delta, r,h}\cap (\R^d\times [0,\eps^2])\subseteq \Omega_2^{\delta,r,h}$,  finishing the proof.  
\end{proof}

\section{Existence and uniqueness}
\label{sec:existence and uniqueness}
With the comparison principle in hand, we establish several well-posedness results.
\subsection{Maximal flows}
We adapt Perron's method to viscosity flows to obtain the existence of  maximal flows and establish Theorem \ref{T.3.4}.

\begin{proof}[Proof of Theorem \ref{T.3.4}]
We define $\Omega$ to be the union of all $U$ where  $U$ is  a viscosity subflow of \eqref{main} with initial $U(0)\subseteq\Omega_0$. First, we claim that the  set of viscosity subflows is non-empty and that $\Omega$ satisfies that $\Omega(0)=\Omega_0$.
Indeed, for any spatial ball $\calB_r(x_0)\subseteq \Omega_0$ (centered at $x_0$ with radius $r$), it is direct to check that
\[
U:=\{(x,t)\,:\,t<r/A,\, x\in \calB_{r-At}(x_0)\}
\]
with $A:=\max_{x\in\overline \Omega_0} \{ -V(x,0),0\}$, is a viscosity subflow. Here we use the convention that $1/0=\infty$. Moreover, since $\Omega_0$ satisfies the interior ball condition, it follows that $\Omega(0)=\Omega_0$.

Next, by the stability of viscosity flow, $\Omega$ is again a subflow. The proof follows the same as the one for \cite[Proposition 3.7]{cardaliaguet2000front}.

Now, we aim to show that $\Omega_\#$ is a superflow. Suppose that this is not the case.  Then there exists a regular set $\Phi$ that is internally tangent to $\Omega_\#$ at some
$(x_0,t_0)\in\partial \Omega_\#\cap \partial\Phi$
such that $\nu_{x_0}\neq 0$ and
\beq \lb{5.111}
 V(x_0,\nabla p(x_0;\Phi(t_0)))> \calK^{\Phi}_{x_0,t_0}.
\eeq
Here $\nu_{x_0,t_0}=(\nu_{x_0},\nu_{t_0})$ denotes the unit outer normal direction of $\Phi$ at $(x_0,t_0)$.
Moreover, by slightly modifying $\Phi$, we can assume that 
\beq\lb{4.3}
\{(x_0,t_0)\}=\partial \Omega_\#\cap \partial\Phi\cap\{t\leq t_0\}.
\eeq
We will write $\phi(x,t)=p(x;\Phi(t))$.

Let $e_0\in \bbS^{d}$ be the outer unit normal direction of $\partial\Phi$ at $(x_0,t_0)$. For some $\eps\in (0,1)$, set
\[
\Phi_{\eps}:=\{(x,t)+\eps e_0\,:\, (x,t)\in \Phi,\,t< t_0\},
\quad\text{and}\quad
\widetilde\Omega:=\Omega_\#\cup \Phi_\eps.
\]
Since $\nu_{x_0}\neq 0$, $\Phi_\eps$ is non-empty and $\widetilde\Omega$ is strictly larger than $\Omega_\#$ for all positive $\eps$.   
Due to \eqref{4.3}, there exists $\gamma_\eps>0$ satisfying $\lim_{\eps\to 0}\gamma_\eps=0$ such that
\beq\lb{4.4}
\widetilde\Omega\subseteq \Omega\cup B_{\gamma_\eps}((x_0,t_0)).
\eeq
Denoting $\phi_\eps(x,t)=p(x;\Phi_\eps(t))$, by \eqref{5.111}, 
Assumption \ref{assump2}, and $\Phi\in C^{1,1}$, we have
\beq\lb{4.5}
\calK^{\Phi_\eps}_{x,t}<V(x,{\nabla\phi_\eps}(x,t))
\eeq
for all $(x,t)\in \partial\Phi_\eps\cap B_{\gamma_\eps}((x_0,t_0))$ when $\nu_x\neq 0$, and $\eps$ is sufficiently small.

To complete the proof, it suffices to show that $\widetilde\Omega$ is a subflow, as this will yield the desired contradiction with the definition of $\Omega$, which will in turn imply that $\Omega$ is a viscosity superflow, as desired. 
To do so, let $\widetilde \Phi$ be a regular set containing $\widetilde\Omega$. Suppose that $\partial\widetilde\Phi$ touches $\partial\widetilde\Omega$ tangentially at a free boundary  point $(x_1,t_1)\in\partial\widetilde\Omega$. If $(x_1,t_1)\in\partial \Omega$ such that $\nu_{x_1}\neq 0$, then the boundary condition holds as $\Omega$ is known to be a subflow. Otherwise, suppose $(x_1,t_1)\in\partial \widetilde\Omega\cap \partial\Phi_\eps$. By \eqref{4.4}, we have
$(x_1,t_1)\in B_{\gamma_\eps}((x_0,t_0))$. Because $\widetilde \Phi$ contains $\widetilde\Omega$ and they attach at $(x_1,t_1)$, 
\[
\calK^{\widetilde\Phi}_{x_1,t_1}\leq \calK^{\widetilde\Omega}_{x_1,t_1}= \calK^{\Phi_\eps}_{x_1,t_1}<V(x_1,\nabla \phi_\eps(x_1,t_1)),
\]
where the last strict inequality is due to \eqref{4.5}.

Again, by $\Phi_\eps\subseteq \widetilde\Phi$ and that $(x_1,t_1)\in \partial\widetilde\Phi\cap\partial \Phi_\eps$, the monotonicity of the elliptic operator $F$ yields
\[
|\nabla p(x_1;\widetilde \Phi(t_1))|\geq |\nabla \phi_\eps(x_1,t_1)|.
\] 
Therefore, the monotonicity of $V$ yields
\[
V(x_1,\nabla p(x_1;\widetilde \Phi(t_1)))\geq V(x_1,\nabla \phi_\eps(x_1,t_1))> \calK^{\widetilde\Phi}_{x_1,t_1},
\]
which implies that $\widetilde\Omega$ is a subflow.
\end{proof}

\subsection{Uniqueness if the initial set shrinks or expands}

Next, we prove uniqueness of viscosity flows if the initial set immediately expands or shrinks. Such results for the classical Hele-Shaw flow can be found in \cite[Theorem 1.8]{contact}.

\begin{theorem}[Uniqueness under shrinking/expanding assumption]
\label{thm:shrink/expand}
Under Assumption \ref{assump2}, let $\Omega_0\subseteq\R^d $ be open, bounded and having $C^{1,1}$ boundary. If
\[
\min_{x\in \partial\Omega_0}V(x,\nabla p(x;\Omega_0))>0\quad\text{or}\quad \max_{x\in \partial\Omega_0}V(x,\nabla p(x;\Omega_0))<0,
\]
then the viscosity flow is unique. 
\end{theorem}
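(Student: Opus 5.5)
The plan is to squeeze an arbitrary viscosity flow $\Omega$ with $\Omega(0)=\Omega_0$ between the maximal flow $\Omega^{\max}$ of Theorem \ref{T.3.4} and a family of slightly rescaled copies of $\Omega^{\max}$. Then we invoke the comparison principle, Theorem \ref{L.cp}, which needs strict initial separation $\overline{\Omega_1(0)}\subset\Omega_2(0)$. Maximality of $\Omega^{\max}$ already gives $\Omega\subseteq\Omega^{\max}$, since $\Omega$ is a subflow with initial set $\Omega_0$. Thus it remains to show $\Omega^{\max}\subseteq\overline{\Omega}$, or more precisely $\Omega^{\max}\subseteq \Omega_\#$ up to the boundary. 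For this we produce strict initial separation by shifting in time, using the sign of the initial velocity.

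\emph{Expanding case} ($\min V>0$). Because $V(x,\nabla p(x;\Omega_0))\geq c_0>0$ on $\partial\Omega_0$ and $\partial\Omega_0\in C^{1,1}$, I would build a classical strict subflow near $t=0$. Its time slices are the $C^{1,1}$ sets $\{x: d(x,\Omega_0)<c_0t/2\}$ (signed distance), and the construction uses continuity of $V$ and of $\nabla p(\cdot;U)$ under $C^{1,1}$ perturbations of $U$ (Assumption \ref{assump3}\ref{item:assump1.1 4}). Comparing this barrier with the superflow $\Omega_\#$ via Theorem \ref{L.cp}, after shrinking the barrier slightly at $t=0$, gives $\overline{\Omega_0}\subset \Omega_\#(\tau)$ for all small $\tau>0$. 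The reason is that the barrier's slice at time $\tau$ contains $\overline{\Omega_0}$ strictly.

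Now set $\Omega_1:=\{(x,t):(x,t+\tau)\in... \}$, i.e.\ take the time-translate $\Omega^\tau_\#(t):=\Omega_\#(t+\tau)$. This is still a superflow of \eqref{main}, since the equation is autonomous in time. Its initial set $\Omega_\#(\tau)$ strictly contains $\overline{\Omega_0}=\overline{\Omega^{\max}(0)}$. Applying Theorem \ref{L.cp} to the subflow $\Omega^{\max}$ and the superflow $\Omega^\tau_\#$ gives $\Omega^{\max}(t)\subseteq\Omega_\#(t+\tau)$. Letting $\tau\to0$ and using lower semicontinuity of $\Omega_\#$ yields $\Omega^{\max}\subseteq\Omega^\#$-type closure of $\Omega$. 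Combined with the reverse inclusion, this identifies $\Omega$ with $\Omega^{\max}$, up to the semicontinuous envelopes that the definition of viscosity flow already allows.

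\emph{Shrinking case} ($\max V<0$). Here the argument is symmetric with the roles exchanged. A classical strict superflow barrier with slices $\{d(x,\Omega_0^c)>c_0t/2\}$ shows, by Theorem \ref{L.cp} applied to the subflow $\Omega^{\max}$, that $\overline{\Omega^{\max}(\tau)}\subset\Omega_0$ for small $\tau$. The time-translate $\Omega^{\max}(\cdot+\tau)$ is then a subflow whose initial set is compactly contained in $\Omega_0=\Omega_\#(0)$. Comparison gives $\Omega^{\max}(t+\tau)\subseteq\Omega_\#(t)$, and letting $\tau\to0$ with upper semicontinuity of $\Omega^{\max}$ gives the needed inclusion.

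The main obstacle is the barrier step. We must verify that the explicit tubular-neighborhood flows are genuine strict sub/superflows in the sense of Definition \ref{D.1.2} for a short time. This requires:

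\begin{itemize}
\item uniform continuity of $x\mapsto\nabla p(x;U_t)$ as $U_t$ varies in $C^{1,1}$;
\item the monotonicity \eqref{cond5'} to absorb the change in $|\nabla p|$;
\item control of the shrinking of the initial slice needed for strict containment, which is handled by starting the barrier from $\{d(x,\Omega_0)<-\eta\}$ and letting $\eta\to0$.
\end{itemize}

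Any externally tangent test set to a smooth flow has normal velocity at most the flow's own, so the inequality transfers from the classical computation.
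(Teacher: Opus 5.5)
Your proposal is correct and is essentially the paper's argument. A short-time $C^{1,1}$ barrier moving with constant normal speed (the paper's $\Phi_{\delta,\eps}(t)=\calB_{\eps t}(\{x\in\Omega_0: d(x,\partial\Omega_0)>\delta\})$), together with Theorem \ref{L.cp}, gives strict expansion or shrinkage; comparison with a time-translated flow and letting $\tau\to0$ then gives the inclusion. The paper runs this symmetrically for two arbitrary flows $\Omega_1,\Omega_2$, whereas you take $\Omega_1=\Omega^{\max}$ and get the reverse inclusion for free from the maximality in Theorem \ref{T.3.4}.

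One small correction: when letting $\tau\to0$, in either case, the property you actually use is upper semicontinuity of $\Omega$ itself, e.g.\ $\Omega^{\max}(t)\subseteq\bigcap_{\tau}\Omega_\#(t\pm\tau)\subseteq\Omega^\#(t)=\Omega(t)$. It is not the lower semicontinuity of $\Omega_\#$ or the upper semicontinuity of $\Omega^{\max}$.
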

\begin{proof}
Let $\Omega$ be a viscosity solution with initial data $\Omega_0$.
First, we show that, under the first assumption, 
\beq\lb{4.1}
\overline{\Omega_0}\subseteq\Omega(t)\quad\text{ for all $t\in (0,1]$}.
\eeq
In this case, for some $\delta,\eps>0$, define
\[
\Phi_{\delta,\eps}(0):=\{x\in\Omega_0\,:\, d(x,\partial\Omega_0)>\delta\},\qquad \Phi_{\delta,\eps}(t):=\calB_{\eps t}(\Phi_{\delta,\eps}(0)).
\]
By the regularity assumptions on $\Omega_0$ and $V$, if $\eps>0$ is sufficiently small, then for all $t\in [0,1]$ and $\delta\in (0,\eps]$,  we have $\partial \Phi_{\delta,\eps}(t)$ is $C^{1,1}$ and 
\[
\min_{x\in \partial \Phi_{\delta,\eps}(t)}V(x,\nabla p(x;\Phi_{\delta,\eps}(t)))\geq \eps.
\]
Since $\calK^{\Phi_{\delta,\eps}}_{x,t}=\eps$ on the boundary, we get that $\Phi_{\delta,\eps}$ is a viscosity subflow. Since $\overline{\Phi_{\delta,\eps}(0)}\subseteq \Omega_0$, Theorem \ref{L.cp} yields that $\Phi_{\delta,\eps}\subseteq \Omega$ for all $t\in [0,1]$. Thus, for each $t\in (0,1]$, taking $\delta=\frac12t\eps$ yields \eqref{4.1}.
Similarly, we can show that, under the second assumption,  
\beq\lb{4.2}
\overline{\Omega(t)}\subseteq\Omega_0\quad\text{ for all $t\in ( 0,1]$}.
\eeq

Now, we prove uniqueness assuming that either \eqref{4.1} or \eqref{4.2} holds. Let $\Omega_i$ with $i=1,2$ be two viscosity flows with initial data $\Omega_0$. It suffices to show that $\Omega_1\subseteq \Omega_2$. We note that, since the coefficients of \eqref{main} are time-independent, $\Omega_i'(\cdot):=\Omega_i(\cdot+s)$ is also a viscosity flow for any $s\in (0,1]$ and for $i=1,2$.

First, suppose that \eqref{4.1} holds, so that
\[
\overline{\Omega_0}\subseteq\Omega_i(s)\quad\text{ for all $s\in (0,1]$ and $i=1,2$.}
\]
 This yields $\overline{\Omega_1(0)}\subseteq\Omega_2'(0)$, so the comparison principle Theorem \ref{L.cp} implies that
\[
\overline{\Omega_1(t)}\subseteq\Omega_2(t+s)
\]
for all $t\geq 0$. By the continuity of viscosity flows and passing $s\to 0$, we obtain $\Omega_1\subseteq\Omega_2$.

Next, suppose that \eqref{4.2} holds, so that 
$
\overline{\Omega_i(s)}\subseteq\Omega_0$ for  $i=1,2$ and for  all $s\in (0,1]$. Since we have $\overline{\Omega_1'(0)}\subset \Omega_2(0)$,  Theorem \ref{L.cp} yields that
$
\overline{\Omega_1(t+s)}\subseteq\Omega_2(t)$
for all $t\geq 0$. Passing $s\to 0$ yields that $\Omega_1\subseteq\Omega_2$, which finishes the proof.


\end{proof}

 \subsection{Generic uniqueness}
Finally, following the approach of \cite{cardaliaguet2000front,card2007}, we prove a generic uniqueness result. Namely, we establish that for ``most" sets $U_0$, there exists a unique viscosity flow of \eqref{main} with initial data $U_0$. To make this precise, we first recall some definitions and facts from set theory and set-valued analysis. 

By classical set theory (see e.g., \cite{rockafellar1998variational}), if $U_n$ is a sequence of uniformly bounded, closed sets in $\R^d$, then $U_n\to U\subset\R^d$ in Hausdorff distance if and only if they converge in Kuratowski limit. Such convergence requires
\[
U=\overline{\limsup}_{n\to \infty}U_n= \overline{\liminf}_{n\to \infty}U_n
\]
where Kuratowski limits superior and inferior are defined as follows
\[
\overline{\limsup}_{n\to \infty}U_n:=\bigcap_{\eps>0}\bigcap_{k\geq 1}\bigcup_{n\geq k} \calB_\eps(U_n,\eps),\qquad \overline{\liminf}_{n\to \infty}U_n:=\bigcap_{\eps>0}\bigcup_{k\geq 1}\bigcap_{n\geq k} \calB_\eps(U_n,\eps).
\]
 
For some $r>0$, let us denote by $\calC_r$  the set of all bounded, closed subsets of $\R^d$ such that they satisfy the interior ball condition with interior balls having radius $r$. We also endow the set $\calC_r$ with the Hausdorff metric. Then $\calC_r$ is a complete separable metric space.

For any $T>0$, we also let $X_T$ denote the all bounded, closed subsets of $\R^d\times [0,T]$. After endowing $X_T$ with the Hausdorff metric,  we get that $X_T$ is a complete separable metric space. Recall that a {\it residual} of  a metric space is defined to be a countable intersection of dense open subsets \cite[Section 1.4.2]{aubin2009set}. Note that Baire's Theorem states that a residual of a complete metric space is dense \cite[Theorem 2.2]{RudinFA}. 

\begin{theorem}[Generic uniqueness]
\label{thm:generic}
Under the assumptions of Theorem \ref{T.3.4}, there exists a residual $\calC_{res}$ of $\calC_r$ such that for any $U_0\in \calC_{res}$, there exists a unique viscosity flow  of the equation \eqref{main} such that $\Omega(0)=U_0$.
\end{theorem}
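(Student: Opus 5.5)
The plan is to follow the semicontinuity argument of Cardaliaguet and Ley. A set-valued map that is monotone with respect to inclusion, and upper or lower semicontinuous, is continuous at the points of a residual set. We build two maps from $\calC_r$ into $X_T$ (then let $T$ range over $\mathbb{N}$). The first, $U_0\mapsto \overline{\Omega^+(U_0)}\cap(\R^d\times[0,T])$, uses the maximal flow $\Omega^+$ of Theorem \ref{T.3.4}. The second, $U_0\mapsto \overline{\Omega^-(U_0)}$, uses a minimal flow $\Omega^-$. We obtain $\Omega^-$ by the dual Perron construction: take the intersection of all superflows whose initial set contains $U_0$, or equivalently the lower semicontinuous envelope of the flows obtained from $\Omega_0=\{x: d(x,U_0^c)>\eta\}$ as $\eta\downarrow 0$. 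By the comparison principle (Theorem \ref{L.cp}), every viscosity flow $\Omega$ with $\Omega(0)=U_0$ satisfies $\Omega^-\subseteq\Omega\subseteq\Omega^+$. Uniqueness at $U_0$ therefore reduces to showing $\overline{\Omega^+}=\overline{\Omega^-}$, up to the appropriate semicontinuous envelopes.

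\textbf{Monotonicity and one-sided semicontinuity.} First, Theorem \ref{L.cp} together with time-translation and the interior ball condition should give the following. If $U_0\subseteq V_0$ are in $\calC_r$ with $U_0$ contained in a slight inner parallel set of $V_0$, then $\Omega^\pm(U_0)\subseteq\Omega^\pm(V_0)$. Moreover, if $U_0\Subset V_0$ (compactly inside), then $\Omega^+(U_0)\subseteq\Omega^-(V_0)$, since a subflow starting compactly inside a superflow stays inside it.

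Next, use stability of subflows under upper limits, as in the proof that $\Omega$ is a subflow in Theorem \ref{T.3.4}, together with maximality. This should show that $U_0\mapsto\overline{\Omega^+(U_0)}$ is upper semicontinuous in the Kuratowski sense: if $U_n\to U_0$ in $\calC_r$, then $\overline{\limsup}\,\overline{\Omega^+(U_n)}\subseteq\overline{\Omega^+(U_0)}$. The argument is that the upper limit is a subflow whose initial set lies in $U_0$, so maximality applies. Symmetrically, $\overline{\Omega^-}$ is lower semicontinuous.

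\textbf{Residual continuity points.} A classical theorem on set-valued maps (Kuratowski, or Aubin--Frankowska, cited above as \cite{aubin2009set}) says that an upper semicontinuous compact-valued map from a complete metric space into the closed subsets of a separable space is continuous on a residual set. Applying this to $\Omega^+$ in $X_T$, intersecting over $T\in\mathbb{N}$, and using Baire's theorem gives a residual $\calC_{res}\subseteq\calC_r$ on which $U_0\mapsto\overline{\Omega^+(U_0)}$ is Hausdorff continuous.

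Now fix $U_0\in\calC_{res}$ and set $U_\eta:=\calB_\eta(U_0)$, which lies in $\calC_r$ and converges to $U_0$ as $\eta\to0$. Since $U_0\Subset U_\eta$, the inclusion above gives $\Omega^+(U_0)\subseteq\Omega^-(U_\eta)\subseteq\Omega^+(U_\eta)$. By continuity at $U_0$, $\overline{\Omega^+(U_\eta)}\to\overline{\Omega^+(U_0)}$. Applying the same inclusion with inner parallel sets $U_{-\eta}\Subset U_0$ gives $\Omega^+(U_{-\eta})\subseteq\Omega^-(U_0)$. Since $\overline{\Omega^+(U_{-\eta})}\to\overline{\Omega^+(U_0)}$ as well, we get $\overline{\Omega^+(U_0)}\subseteq\overline{\Omega^-(U_0)}$. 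Combined with $\Omega^-\subseteq\Omega^+$, the closures coincide.

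\textbf{Main obstacle.} The hardest step is passing from equality of closures to equality of the flows themselves. For any viscosity flow $\Omega$ starting at $U_0$, the closure is squeezed, $\overline{\Omega}=\overline{\Omega^+}$, and the defining semicontinuity of flows should pin down $\Omega=(\overline{\Omega^+})$ in the sense that $\Omega_\#$ and $\Omega$ are recovered from the closure. Making this precise means checking that the interior of $\overline{\Omega^+}$ is $\Omega^-_\#$, using the strict inclusions $\Omega^+(U_{-\eta})\subseteq\Omega^-(U_0)$. A secondary difficulty is that the inner parallel sets $U_{-\eta}$ must stay in $\calC_r$, or at least in some $\calC_{r'}$. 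I would handle this by replacing $U_{-\eta}$ with the union of interior balls of radius $r$ shrunk to radius $r-\eta$, which keeps the interior ball condition with radius $r-\eta$. If necessary, I would run the residual argument in $\calC_{r'}$ for rational $r'<r$ and intersect the resulting residual sets.
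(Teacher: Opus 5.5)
Your skeleton is the paper's. You show the maximal-flow map $\calS=\Omega^+$ is upper semicontinuous via stability of subflows plus maximality (Theorem \ref{T.3.4}), get continuity on a residual from \cite[Theorem 1.4.13]{aubin2009set}, and then approximate $U_0$ from inside and apply Theorem \ref{L.cp}.

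The weak point is the minimal flow $\Omega^-$. The paper never constructs one, and your ``dual Perron'' construction is only asserted. So is its claimed equivalence with the limit of flows started from inner parallel sets. Moreover, $\Omega^-\subseteq\Omega$ for a flow $\Omega$ with the \emph{same} initial set cannot come from Theorem \ref{L.cp}, which requires $\overline{\Omega_1(0)}\subset\Omega_2(0)$. None of this is needed:
\begin{itemize}
\item compare the subflow $\Omega^+(U_{-\eta})$ directly with an arbitrary flow $\Omega'$ starting from $U_0$, which gives $\Omega^+(U_{-\eta})\subseteq\Omega'$;
\item continuity of $\calS$ at $U_0$ then gives $\Omega^+(U_0)\subseteq\overline{\Omega'}$;
\item maximality gives $\Omega'\subseteq\Omega^+(U_0)$.
\end{itemize}
That is exactly the paper's proof. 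The outer sets $U_\eta$ and the monotonicity claims for $\Omega^\pm$ can be dropped.

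Two further remarks. First, your fallback of running the argument in $\calC_{r'}$ for rational $r'<r$ and intersecting does not produce a residual of $\calC_r$. The set $\calC_r$ is nowhere dense in $\calC_{r'}$: attaching a ball of radius $r'$ that protrudes slightly at an extreme boundary point is a Hausdorff-small perturbation that leaves $\calC_r$. Hence a residual of $\calC_{r'}$ may miss $\calC_r$ altogether. The paper simply takes $U_n'\in\calC_r$ with $B_{1/n}(U_n')\subseteq U_0$ and $U_n'\to U_0$, without further justification. Such $U_n'$ need not exist for every $U_0\in\calC_r$ (take $U_0$ a closed ball of radius exactly $r$), so this step deserves care in either version.

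Second, what you call the main obstacle is not treated in the paper either. Its proof ends with $\Omega\subseteq\overline{\Omega'}$, i.e.\ uniqueness up to closure, and that is also what your argument delivers.
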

\begin{proof}
Take any $T>0$ and $r>0$, and let us define a map $\calS:\calC_r\to X_T$ by the following: for any $U_0\in \calC_r$, $\calS(U_0)$ is the maximal viscosity flow with initial set $U_0$ in the time interval $[0,T]$. So,  $\calS(U_0)$ is the solution from Theorem \ref{T.3.4}. First, we will show that $\calS$ is upper semicontinuous on $\calC_r$.

To this end, let $U_0,U_n\in \calC_{r}$ be such that
\[
\overline{\limsup}_{n\to \infty}U_n=\bigcap_{\eps>0}\bigcap_{k\geq 1}\bigcup_{n\geq k} \calB_\eps(U_n,\eps)=U_0.
\]
Let $\Omega_n=\calS(U_n)$ and let $\Omega=\calS(U_0)$.
It follows from the stability of viscosity flows that 
\[
\Omega':=\overline{\limsup}_{n\to \infty}\Omega_n=\bigcap_{\eps>0}\bigcap_{k\geq 1}\bigcup_{n\geq k}  B_\eps(\Omega_n,\eps)
\]
satisfies $\Omega'(0)=U_0$ and $(\Omega')^\#$ is a viscosity subflow. By Theorem \ref{T.3.4}, we have $(\Omega')^\#\subseteq \Omega$. Since $\Omega'\subseteq (\Omega')^\#$, this shows that the mapping $\calS$ is upper semicontinuous. So, it follows from \cite[Theorem 1.4.13]{aubin2009set} that $\calS$ is actually continuous over a residual $\calC_{res}$ of $\calC_r$.

Now, take any $U_0\in\calC_{res}$. 
Using the continuity property of $\calS$, we shall show that any viscosity solution $\Omega'$ with initial data $U_0$ satisfies $\Omega\subseteq \overline{\Omega'}$, which yields the desired uniqueness. The proof is similar to the one of \cite[Proposition 3.11]{cardaliaguet2000front}. Let $U_n'\in \calC_r$ be such that $ B_{1/n}(U_n')\subseteq {U_0}$ and let $\Omega'_n$ be the maximal viscosity flow with initial data $U_n'$. It follows from Theorem \ref{L.cp} that $\Omega_n'\subseteq \Omega'$. If we further take $U_n'$ to converge to $U_0$ in Hausdorff distance, then the continuity property of $\calS$ yields that 
\[
\Omega_n'=\calS(U_n')\to \calS(U_0)= \Omega
\]
in Hausdorff distance. Since $\Omega_n'\subseteq\Omega'$ for all $n$, we get that $\Omega\subseteq \overline{\Omega'}$, which finishes the proof.
\end{proof}


\medskip

\noindent {\bf Data Availability} Data sharing is not applicable to this article as no datasets were generated or analysed during
the current study.

\medskip

\noindent {\bf Conflict of interest} The authors declare that they have no conflict of interest.



\end{document}